\renewcommand{\P}{\mathds P}
\newcommand{\R}{\mathds R}
\newcommand{\E}{\mathds E}
\newcommand{\eps}{\varepsilon}
\newcommand{\ds}{\displaystyle }
\renewcommand{\O}{\mathcal O}
\newcommand{\bO}{\partial\mathcal O}
\newcommand{\bd}{\mbox{\tiny b}}
\newtheorem{theorem}{Theorem}[section]
\newtheorem{Definition}[subsection]{Definition}
\newtheorem{Corollary}[theorem]{Corollary}
\newtheorem{Lemma}[theorem]{Lemma}
\newtheorem{Hypothesis}[theorem]{Hypothesis}
\newtheorem{Remark}{Remark}[section]
\newtheorem{Example}[Remark]{Example}
\numberwithin{equation}{section}
\newcommand{\ter}{(0,T)\times\O\times\Omega}
\def\<#1>{\langle#1\rangle}
\title{
Stochastic differential equations with variable structure
driven by multiplicative Gaussian noise and sliding mode dynamic}
\author[1]{Viorel Barbu}
\author[2]{Stefano Bonaccorsi}
\author[2]{Luciano Tubaro} 
\affil[1]{\small University Al. I. Cuza and Institute of Mathematics Octav Mayer,  Iasi, Romania}
\affil[2]{Department of Mathematics, University of Trento, Italy}
\begin{document}
\maketitle
\begin{abstract} This work is concerned with existence of weak solutions to
discontinuous stochastic differential equations driven by multiplicative Gaussian noise and sliding
mode control dynamics generated by stochastic differential equations with variable structure,
that is with jump nonlinearity.
The treatment covers the finite dimensional stochastic systems and the stochastic
diffusion equation with multiplicative noise.
\end{abstract}

\section{Introduction}
We consider here stochastic differential equations of the form
\begin{equation}\label{e1.1}
\begin{array}{l}
dX+A\, X\,dt+f(X)\,dt=B(X)\,dW,\quad t\in(0,T)\\
X(0)=x,
\end{array}
\end{equation}
where $A\colon D(A)\subset H\to H$ is self-adjoint, positive definite such that
$A^{-1+\delta}$ is of trace class for some $\delta\in(0,1)$, $W$ is a cylindrical Wiener process of the form
\begin{equation}\label{e1.2}
W(t)=\sum_{j=1}^\infty \mu_j\, \beta_j(t)\,e_j.
\end{equation}
Here $\{e_j\}$ is an orthonormal basis in $H$, $Ae_j=\lambda_j\,e_j$ and $\{\beta_j\}_{j=1}^\infty$
is a mutually independent system of Brownian motions in a probability space
$\{\Omega,\mathcal F,\P\}$ with filtration $(\mathcal F_t\}_{t\ge0}$.
The operator $f\colon H\to H$ is Borel measurable and locally bounded while
$B\in L(H,\mathcal L^2(H))$ where $\mathcal L^2(H)$ is the space of Hilbert-Schmidt
operators on $H$.

It should be said that under these general conditions equation \eqref{e1.1} is not well posed
except the case of additive noise $(B(X)=I)$ where \eqref{e1.1} has a unique weak (martingale) solution,
see \cite{DFPR}. Equation \eqref{e1.1} has however a unique strong solution if 
$f$ is Lipschitz or accretive and continuous, see \cite{DZ}, or more generally
if $f$ is a maximal monotone graph in $\R\times\R$ with domain $D(f)=\R$ (see \cite{B1}).

Equations of the form \eqref{e1.1} with discontinuous $f$ describe systems with variable structure
and, in particular, closed-loop control systems with ``sliding'' mode behaviour. Here we shall study from this
perspective two special cases.\\The first one is the finite dimensional system
\begin{equation}\label{e1.3}
\begin{array}{l }
dX+f(X)\,dt=\sigma(X)\,dW    \\
X(0)=x  
\end{array}
\end{equation}
where $W$ is a $n$-dimensional Wiener process and $f\in L_{loc}^\infty(\R^n,\R^n)$,
$\sigma\in Lip(\R^n,L(\R^n,\R^n))$.\\The second one is
the stochastic partial differential equation
\begin{equation}\label{e1.4}
\begin{array}{l}
dX-\Delta X\,dt+f(X)\,dt=b(X)\,dW,\quad \text{in }(0,T)\times\O\\
X=0,\hspace{5.6cm}\text{on }(0,T)\times\bO\\
X(0,\xi)=x(\xi),\hspace{4.15cm}\xi\in \O
\end{array}
\end{equation}
in a bounded and open domain $\O\subset \R^d$, $d\ge 1$ with
smooth boundary $\bO$. Here $W$ is a cylindrical Wiener process of the form \eqref{e1.2}
in $H=L^2(\O)$,
where $\{\beta_j\}_{j=1}^\infty$ is a system of independent Brownian motions in a filtered probability space
$(\Omega,\mathcal F,\mathcal F_t,\P)$, $\mu_j\in \R$, $j=1,2,\ldots$ and $\{e_j\}$ is an orthonormal
basis in $L^2(\O)$ to be made precise later on. Here $f\in L_{loc}^\infty(\R)$ and $b\in \text{Lip}_{loc}(\R)$.

Like
in deterministic case, for existence in equation \eqref{e1.4} one must extend it to a multivalued stochastic
equation of the form
\begin{equation}\label{e1.5}
\begin{array}{l}
dX-\Delta X\,dt+F(X)\,dt\ni b(X)\,dW,\quad \text{in }(0,T)\times\O\\
X=0,\hspace{5.6cm}\text{on }(0,T)\times\bO\\
X(0,\xi)=x(\xi),\hspace{4.15cm}\xi\in \O
\end{array}
\end{equation}
where $F\colon\R\to 2^\R$ is the Filippov map associated with $f$,
that is (see \cite{Fil}, \cite{Fil1})
\begin{equation}
\begin{array}{l}
F(r)=[m(f_r),M(f_r)],\qquad \forall r\in\R\\
\raisebox{-4pt}{$\ds m(f_r)=\lim_{\delta\to0} \mathop{\mbox{ess inf}}_{u\in[r-\delta,r+\delta]}f(u)$}\\
\ds M(f_r)=\lim_{\delta\to0} \mathop{\mbox{ess sup}}_{u\in[r-\delta,r+\delta]}f(u).
\end{array}
\end{equation}
Roughly speaking, $F$ is obtained from $f$ by ``filling" the jumps of $f$ in discontinuity points.
If $f\in L^\infty_{\text{\tiny loc}}(\R^n,\R^n)$, where $n\ge 1$, the Filippov map $F\colon \R^n\to2^{\R^n}$
is defined as
\begin{equation}\label{e1.7}
F(r)=\bigcap_{\delta>0}\bigcap_{m(N)=0}\overline{\textrm{conv}f(B_\delta(r)\setminus N)}
\end{equation}
where $m$ is the Lebesgue measure and $B_\delta(r)$ is the ball of centre $r$ and radius $\delta$.
Of course $F(r_0)=f(r_0)$ in all continuity points $r_0$ of $f$.
Then to get existence in \eqref{e1.3} one should replace $f$ by $F$ given by \eqref{e1.7}.
If $f$ is monotone and measurable then $F$ is maximal monotone in $\R^n\times\R^n$
and locally bounded in $\R^n$, 
(see \cite[Proposition 25]{B0}), and so, as
shown in \cite[Theorem 2.2]{B1}, equation \eqref{e1.3} has a unique strong solution (see also \cite{B2}).
In the general case we consider here, the best that we can however expect is only a martingale solution
for \eqref{e1.3} (see Theorem 2.1, in which in general we do not have the uniqueness of the solution).


The main existence result for equation \eqref{e1.3} is established in Section 2, where it's also given a
``sliding mode" type result for this equation.

In Sections 3, 4  and 5 it is studied a similar problem for equation \eqref{e1.5} and also for a
stochastic parabolic system.
\paragraph{Notation}
We use the standard notation for the Sobolev spaces $H^k(\O)$, $k=1,2$, $H_0^1(\O)$ and the Lebesgue
integrable function spaces on $\O\subset\R^n$. The norm of $H_0^1(\O)$ is denoted by $\|\cdot\|_1$ and the norm
of $L^p(\O)$ by $|\cdot|_p$ ($1\le p\le\infty$). The scalar product of $L^2(\O)$ and the duality pairing
between $H_0^1(\O)$ and the dual space $H^{-1}(\O)$ is denoted by the same symbol $\langle\cdot,\cdot\rangle_2$.
We denote by $C([0,T];H)$ the space of all continuous $H$-valued functions on $[0,T]$ and we also refer to
\cite{DZ} for basic results pertaining stochastic processes with values in Hilbert spaces. Finally,
we denote by $C^k_{\bd}(\R)$, $k=0,1$, the space of functions of class $C^k$ on $\R$,
with continuous and bounded derivatives up to order $k$. The norm in $\R$ or $\R^n$ is denoted
by the same symbol $|\cdot|$, the difference being clear from the context.

\section{Weak solution and ``sliding'' mode for the system \eqref{e1.3}}
We shall study here system \eqref{e1.3}
where $W$ is a $n$-dimensional Wiener process, in a probability space $(\Omega,\mathcal F,\P)$
and $f\in L^\infty_{\text{\tiny loc}}(\R^n;\R^n)$, $\sigma\in \mbox{Lip}(\R^n;\mathcal L(\R^n;\R^n))$.\\
We consider the Filippov map $F\colon\R^n\to\R^n$ associated with $f$ which was 
introduced in \eqref{e1.7}.
\begin{Definition}\label{d2.1}
The system $(\Omega,\mathcal F,\P,(\mathcal F_t)_{t\ge0},W,X))$ is said to be a martingale solution to \eqref{e1.3}
if $(\Omega,\mathcal F,(\mathcal F_t)_{t\ge0},\P)$ is a filtered probability space
on which it is defined an $(\mathcal F_t)_{t\ge0}$-Wiener process $W$
and $X$ is an $(\mathcal F_t)_{t\ge0}$-adapted, $\R^n$-valued, continuous process that satisfies $\P$-a.s.
the equation
\begin{equation}\label{eq:2.1}
X(t)+\int_0^t\eta(s)\,ds=x+\int_0^t \sigma(X(s))\,dW(s),\qquad \forall t\ge0.
\end{equation}
Here $\eta\in L^\infty((0,T)\times\Omega)$, for each $T>0$, is an $(\mathcal F_t)_{t\ge0}$-adapted
process such that
\[
\eta\in F(X),\qquad \mbox{a.e. in } (0,\infty)\times\Omega.
\]
\end{Definition}

This definition extends verbatim to infinite dimensional equation \eqref{e1.1}. In literature such a solution
is also called {\bf weak solution}. A martingale solution which is $\bar{\mathcal F}_t ^W$-adapted, where
$\bar{\mathcal F}_t^W$ is the completed natural filtration of $W$ is called strong solution, see \cite{DFPR}.
\\
We have
\begin{theorem}\label{t2.1}
Assume that $f\colon\R^n\to\R^n$ is measurable and that
\begin{equation}\label{eq:2.2}
|f(r)|\le a_1\,|r|+a_2,\qquad \forall r\in\R^n
\end{equation}
where $a_1\ge0$, $a_2\in\R$. Then for each $x\in\R^n$ there is at least one martingale solution
$(\tilde{\Omega},\tilde{{\mathcal F}},\tilde{\P},\tilde{ W},\tilde X)$ to
\eqref{e1.3} which satisfies the estimate
\begin{equation}\label{eq:2.3}
\tilde{\E}|\tilde X(t)|^2 \le C_T\,(|x|^2+1),\quad x\in R^n,\;t\in[0,T].
\end{equation}
\end{theorem}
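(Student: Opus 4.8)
The plan is to construct the martingale solution by a standard mollification–compactness argument. First I would approximate $f$ by a sequence of smooth, globally Lipschitz functions $f_\varepsilon$ obtained by mollification, chosen so that they retain the linear growth bound \eqref{eq:2.2} uniformly in $\varepsilon$ (mollifying preserves such a bound up to a harmless shift in the constants). For each $\varepsilon>0$ the equation
\[
dX_\varepsilon + f_\varepsilon(X_\varepsilon)\,dt = \sigma(X_\varepsilon)\,dW,\qquad X_\varepsilon(0)=x,
\]
has a unique strong solution on the original space $(\Omega,\mathcal F,\P,(\mathcal F_t)_{t\ge0},W)$ by the classical theory, since $f_\varepsilon$ is Lipschitz and $\sigma$ is Lipschitz (cf.\ \cite{DZ}).

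Next I would derive the a priori estimate \eqref{eq:2.3} uniformly in $\varepsilon$. Applying Itô's formula to $|X_\varepsilon(t)|^2$, using the linear growth of $f_\varepsilon$, the linear growth of $\sigma$ (which follows from its global Lipschitz property), the Burkholder–Davis–Gundy inequality for the stochastic term, and Gronwall's lemma, one gets
\[
\tilde{\E}\,\sup_{t\le T}|X_\varepsilon(t)|^2 \le C_T(|x|^2+1)
\]
with $C_T$ independent of $\varepsilon$. The same computation, combined with the equation itself, yields a uniform bound on the increments $\E|X_\varepsilon(t)-X_\varepsilon(s)|^4 \le C|t-s|^2$ (using BDG of order $4$ and the growth bounds), which gives tightness of the laws of $X_\varepsilon$ in $C([0,T];\R^n)$ via the Kolmogorov–Chentsov criterion. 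One also needs tightness of the auxiliary process $\eta_\varepsilon := f_\varepsilon(X_\varepsilon)$; here one uses that $\eta_\varepsilon$ is bounded in $L^2((0,T)\times\Omega)$ (again by linear growth), so its laws are tight as a $L^2(0,T;\R^n)$-valued random variable with the weak topology — alternatively one regards $\int_0^\cdot \eta_\varepsilon\,ds$, which is tight in $C([0,T];\R^n)$ by equicontinuity. Together with the (trivial) tightness of $W$, the joint family $(X_\varepsilon, \eta_\varepsilon, W)$ is tight on a suitable product space.

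Then I would invoke Skorokhod's representation theorem to obtain, along a subsequence, a new probability space $(\tilde\Omega,\tilde{\mathcal F},\tilde\P)$ carrying random variables $(\tilde X_\varepsilon, \tilde\eta_\varepsilon, \tilde W_\varepsilon)$ with the same joint laws, converging $\tilde\P$-a.s.\ to a limit $(\tilde X, \tilde\eta, \tilde W)$ in the respective topologies. One checks that $\tilde W$ is an $(\tilde{\mathcal F}_t)$-Wiener process for the filtration generated by $(\tilde X,\tilde W)$, that $\tilde X$ satisfies \eqref{eq:2.3}, and one passes to the limit in the approximating equation: the drift term $\int_0^t\tilde\eta_\varepsilon\,ds \to \int_0^t\tilde\eta\,ds$ and the stochastic integral $\int_0^t\sigma(\tilde X_\varepsilon)\,d\tilde W_\varepsilon \to \int_0^t\sigma(\tilde X)\,d\tilde W$ (the latter by the continuity of $\sigma$, uniform integrability from the $L^2$-bounds, and a standard lemma on convergence of stochastic integrals under joint convergence of integrands and driving noises). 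This yields \eqref{eq:2.1}. The crucial and most delicate step is the last one: identifying $\tilde\eta \in F(\tilde X)$ a.e.\ on $(0,\infty)\times\tilde\Omega$. Pointwise convergence $f_\varepsilon(\tilde X_\varepsilon)\to \tilde\eta$ does not immediately give this, because $f$ is only measurable and its values near a point are controlled only by the Filippov convexification \eqref{e1.7}. The argument is: fix $\delta>0$; for $\varepsilon$ small the mollification radius is $<\delta$, so $f_\varepsilon(y)\in \overline{\mathrm{conv}}\,f(B_\delta(y)\setminus N)$ for every null set $N$ transverse to the mollifier, hence $f_\varepsilon(\tilde X_\varepsilon(t,\omega))$ lies, for $\tilde X_\varepsilon(t,\omega)$ close to $\tilde X(t,\omega)$, in the slightly enlarged set $\overline{\mathrm{conv}}\,f(B_{2\delta}(\tilde X(t,\omega))\setminus N)$; passing to the a.s.\ limit and then letting $\delta\downarrow 0$ via the intersection defining $F$, together with a Mazur-type argument to handle the weak convergence of $\tilde\eta_\varepsilon$ in $L^2((0,T)\times\tilde\Omega)$ (convex combinations converge strongly, hence a.e.\ along a further subsequence), gives $\tilde\eta(t,\omega)\in F(\tilde X(t,\omega))$ for a.e.\ $(t,\omega)$. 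The boundedness $\tilde\eta\in L^\infty((0,T)\times\tilde\Omega)$ claimed in Definition \ref{d2.1} follows from local boundedness of $f$ combined with the a.s.\ continuity (hence local boundedness of trajectories) of $\tilde X$ on compact time intervals — more precisely one obtains $\tilde\eta\in L^\infty(0,T;\R^n)$ $\tilde\P$-a.s., which suffices.
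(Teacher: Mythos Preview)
Your proposal is correct and follows essentially the same route as the paper: mollify $f$, solve the Lipschitz approximations, derive uniform moment bounds via It\^o and BDG, prove tightness of the laws of $(X_\varepsilon,W)$ in $C([0,T];\R^n)$, apply Skorokhod, pass to the limit in the equation (using the Gy\"ongy--Krylov lemma for the stochastic integral), and identify $\tilde\eta\in F(\tilde X)$ via the observation $f_\varepsilon(y)\in\overline{\mathrm{conv}\,f(B_\varepsilon(y))}$ combined with Mazur's theorem. The only cosmetic differences are that you use fourth-moment increment bounds and Kolmogorov--Chentsov for tightness (the paper uses second moments and an explicit Ascoli--Arzel\`a set, which is actually slightly looser), and you carry $\eta_\varepsilon$ through Skorokhod whereas the paper extracts $\tilde\eta$ afterwards as a weak-$\ast$ limit of $f_\varepsilon(\tilde X_\varepsilon)$; neither changes the substance.
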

\begin{proof}
Consider the approximating equation
\begin{equation}\label{e4.6}
\begin{array}{l }
    dX_\eps+f_\eps (X_\eps)\,dt=\sigma(X_\eps)\,dW,\qquad t\in [0,T]   \\
     X(0)=x   
\end{array}
\end{equation}
where $f_\eps$ is 
a smooth approximation of $f$ given by
\begin{equation}\label{e4.7}
f_\eps(r)=\int_{\R^n}f(r-\eps\,\theta)\,\rho(\theta)\,d\theta,\quad \forall\eps>0, r\in \R^n.
\end{equation}
Here $\rho\in C^\infty_{0}(\R^n)$ is any mollifier such that
\begin{equation}\label{e2.6n}
\rho(r)\ge0,\quad\rho(r)=\rho(-r),\quad \rho(r)=0\mbox{ for }|r|\ge 1,
\quad \int_{-\infty}^\infty\rho(r)\,dr=1.
\end{equation}
Let $X_\eps\in L^2(\Omega;C([0,T];\R^n))$ be the strong solution to \eqref{e4.6}.
By \eqref{eq:2.2} and It\^o's formula it follows that
\[
d\,|X_\eps(t)|_2^2\le C\,|X_\eps(t)|_2^2\,dt+X_\eps(t)\cdot \sigma(X_\eps(t))\,dW_t
\]
and so by the Burkholder-Davis-Gundy theorem (see e.g., \cite{DZ}) we have
\begin{equation}\label{e2.7n}
\E\Big[\sup_{t\in [0,T]} |X_\eps(t)|^2\Big]\le C\,(1+|x|^2),\qquad \forall \eps>0
\end{equation}
(Here and everywhere in the following we shall denote by $C$ several positive constants independent of $\eps$.)

We set $Y_\eps=(X_\eps,W)$ and
we consider $\nu_\eps=\mathcal L(Y_\eps)$ (the law of $Y_\eps$) that is
$\nu_\eps(\Gamma)=\P[Y_\eps\in \Gamma]$ for each Borelian set $\Gamma\subset C([0,T];\R^n)\times C([0,T];\R^n)$. Let us show that
$\{\nu_\eps\}$ is tight in $(C([0,T];\R^n))^2 = C([0,T];\R^n)\times C([0,T];\R^n)$.
This means that for each $\delta>0$ there is a compact subset $\Gamma$ of
$(C([0,T];\R^n))^2$
such that $\nu_\eps(\Gamma^c)\le \delta$ for all $\eps>0$. We take for $r>0$, $\gamma>0$,
\[
\begin{split}
\Gamma=B_{r,\gamma}=\{y\in {(C([0,T];\R^n))^2}: |y(t)|\le r, \forall t\in[0,T],\hspace{2cm}\\
|y(t)-y(s)|\le \gamma \,|t-s|^{\tfrac12},\quad \forall t,s\in[0,T]\}
\end{split}
\]
Clearly, by the Ascoli-Arzel\`a theorem, $B_{r,\gamma}$ is compact in {$(C([0,T];\R^n))^2$}.
On the other hand, by \eqref{e2.4} we have via It\^o's formula applied to the process
$t\to|X_\eps(t)-X_\eps(s)|_2^2$
\[
\begin{split}
\tfrac12\,\E |X_\eps(t)-X_\eps(s)|^2
+
\E\int_s^t\langle f_\eps(X_\eps(\theta)),X_\eps(\theta)-X_\eps(s)\rangle \,d\theta\\
\le C\,\E\int_s^t|X_\eps(\theta)|^2\,d\theta
\quad 0\le s\le t\le T.
\end{split}
\]
Taking into account estimate \eqref{e2.7n}, we obtain via Gronwall's lemma that
\begin{equation}\label{e2.16n}
\E|X_\eps(t)-X_\eps(s)|^2\le C\int_s^t |X_\eps(\theta)|^2\,d\theta\le C\;|t-s|.
\end{equation}
By estimates \eqref{e2.7n}, \eqref{e2.16n}  and by
\[
\rho\;\P[|Y|\ge\rho] \le \E|Y|,\quad \forall \rho>0,
\]
we see that there are $\gamma$, $r$ independent of $\eps$ such that
$\nu_\eps(B_{r,\gamma}^c)\le\delta$, as desired.
Then by the Skorohod's representation theorem there exist a probability space $(\tilde\Omega,\tilde{\mathcal F},\tilde\P)$
and random variables $\tilde X$, $\tilde X_\eps$, $\tilde W_\eps$, $\tilde W$ such that
${\cal L}(\tilde X_\eps,\tilde W_\eps)={\cal L}(X_\eps, W_\eps)$ and for $\tilde\P$-almost every $\omega\in \tilde \Omega$
\begin{equation}\label{e4.8}
\begin{array}{l}
\tilde W_\eps\to \tilde W,\;\tilde X_\eps\to \tilde X\quad\quad \P\mbox{-a.s. in } C([0,T];\R^n)\\
\sigma(\tilde{X_\eps})\to \sigma(\tilde X)\qquad\qquad \P\mbox{-a.s. in } C([0,T];\R^n)
\end{array}
\end{equation}
as $\eps\to 0$. We have also $\mathcal L(f_\eps(\tilde X_\eps))=\mathcal L(f_\eps(X_\eps))$
and so by \eqref{eq:2.2}, \eqref{e4.7} it follows that on a subsequence,  denoted $\{\eps_n\}$,
\begin{equation}\label{e4.9}
f_{\eps_n}(\tilde X_{\eps_n})\to \tilde \eta\qquad \mbox{weak-star in } L^\infty((0,T)\times\tilde \Omega).
\end{equation}
Let us show that
\begin{equation}\label{e4.10}
\tilde \eta\in F(\tilde X),\qquad \mbox{a.e. in } (0,T)\times\tilde\Omega.
\end{equation}
We have by \eqref{e4.7},
\[
\begin{split}
f_\eps(\tilde X_\eps(t,\omega))=
\int_{\R^n}f(\tilde X_\eps(t,\omega)-\eps\,\theta)\,\rho(\theta)\,d\theta
\in \overline{{\textrm{conv}f(B_\eps(\tilde X_\eps(t,\omega)))}},\qquad\\
\forall (t,\omega)\in [0,T]\times\tilde\Omega,
\end{split}
\]
and this implies that
\[
\Sigma(t,\omega)=\big\{\lim_{\eps_n\to 0}f_{\eps_n}(\tilde X_\eps(t,\omega))\big\}
\subset F(\tilde X(t,\omega)),\qquad\forall (t,\omega)\in [0,T]\times\tilde\Omega.
\]
By \eqref{e4.9}, which implies of course also the weak convergence in $L^2((0,T)\times\tilde\Omega)$,
 it follows by Mazur's theorem (see e.g., \cite[pag. 120]{Y}) that there is a convex combination
of $f_{\eps_n}$, that is
$$\varphi_n(t,\omega)=\sum_{i=1}^{k_n} \alpha_i^{(n)}f_{\eps_i}(\tilde X_{\eps_i}(t,\omega)),$$
$\sum_{i=1}^{k_n} \alpha_i^{(n)}=1$, $0\le\alpha_i^{(n)}\le1$, which is strongly convergent in
$L^2((0,T)\times\tilde\Omega)$ to $\tilde\eta$ and so on a subsequence again denoted $\{n\}$,
\[
\lim_{n\to\infty} \varphi_n(t,\omega)=\tilde\eta(t,\omega),\qquad \mbox{a.e. }(t,\omega)\in(0,T)\times\tilde\Omega.
\]
Since $\lim_{n\to\infty} \varphi_n(t,\omega)\in F(\tilde X_\eps(t,\omega))$ we obtain \eqref{e4.10}  as claimed.

If we define
\[
\tilde{\mathcal F}_t^\eps=\sigma\big(\tilde X_\eps(s),\tilde W_\eps(s);0\le s\le t\big),\qquad t\ge 0,
\]
\[
\tilde{\mathcal F}_t=\sigma\big(\tilde X(s),\tilde W(s);0\le s\le t\big),\qquad t\ge 0,
\]
then it follows that $(\tilde W_\eps,\tilde{\mathcal F}^\eps_t)$ and
$(\tilde W,\tilde{\mathcal F}_t)$ are Wiener processes and that $\P$-a.s.,
\[
\tilde X_\eps(t)+\int_0^t f_\eps(\tilde X_\eps(s))\,ds=
x+\int_0^t \sigma(\tilde X_\eps(s)\,d\tilde W_\eps(s),\qquad \forall t\in[0,T].
\]
Taking into account \eqref{e4.9} and that $\P$-a.s (see Lemma 3.1 in \cite{GK})
\[
\lim_{\eps\to0}\int_0^t\sigma(\tilde X_\eps(s))\,d\tilde W_\eps(s)=\int_0^t\sigma(\tilde X(s))\,d\tilde W(s),\qquad \forall t\in[0,T],
\]
we obtain that $\tilde\P$-a.s.
\[
\tilde X(t)+\int_0^t \tilde\eta(s)\,ds=x+\int_0^t \sigma(\tilde X(s))\,d\tilde W(s),\qquad \forall t\in[0,T].
\]
This means that the system
$({\tilde \Omega},{\tilde{\mathcal F}},\{{\tilde{\mathcal F}}_t\}_{t\ge 0},{\tilde \P},{\tilde W}(t),\tilde X(t))$
is a martingale solution to \eqref{e1.3}.
The estimate \eqref{eq:2.3} follows by
 \eqref{e2.7n} which in turn implies that
\[
\E|\tilde X_\eps(t)|^2 \le C\,(1+|x|^2),\quad \forall \eps>0,\;t\in[0,T].
\]
Such a process $\tilde X$ can be extended to all of $(0,\infty)$.
\end{proof}
\begin{Remark}\em
If $f$ and $\sigma$ are in $L^\infty(\R^n)$ and
\begin{equation}
\sum_{i,j=1}^n(\sigma^\ast\sigma)_{ij}(x)\,\xi_i\xi_j\ge \alpha\sum_{i=1}^n \xi_i^2,\qquad
\forall \xi=(\xi_i)\in\R^n,\label{e2.11m}
\end{equation}
then, as shown by A. Yu. Veretennikov \cite{V}, equation \eqref{e1.3} has a unique strong solution $X$.
(On these lines see also \cite{GK}.) It should be said however that for the applications
we have in mind, the nondegeneracy condition \eqref{e2.11m} is too restrictive.
\end{Remark}

The sliding mode dynamics arises in differential systems with variable structure of the form \eqref{e1.3} and
a typical case is that when $f$ has the form
\begin{equation}\label{e2.11n}
f(r)=
\begin{cases}
f_1(r)&\mbox{if } g(r)>0\\
f_2(r)&\mbox{if } g(r)<0
\end{cases}\qquad \forall r\in\R
\end{equation}
where $g\in C^2(\R^n)$, $f_1$,$f_2\in C^1(\R^n, \R^n)$ and $\sigma\in\mbox{Lip}(\R^n,\mathcal L(\R^n,\R^n))$
satisfy the following conditions
\begin{align}
\label{e4.12}&|f_i(r)|\le a_{1i}\,|r|+a_{2i},\quad\forall r\in\R^n,\quad i=1,2\\
\label{e4.13}&\sup_{r\in\R^n}\{|\nabla g(r)|+|D^2g(r)|\}<\infty\\
\label{e4.14}&\nabla g(r)\cdot f_1(r)\ge \alpha\quad\mbox{in }\{r\in\R^n :g(r)>0\}\\
\label{e4.15}&\nabla g(r)\cdot f_2(r)\le -\alpha\quad\mbox{in }\{r\in\R^n :g(r)<0\}\\
\label{e4.16}&|D^2g(r)||\sigma(r)|^2\le C^\ast\,|g(r)|,\qquad \forall r\in\R^n
\end{align}
where $\alpha>0$.  We have
\begin{theorem}\label{t4.2}
Under assumptions \eqref{e4.12}--\eqref{e4.16} for each $x\in \R^n$ there is
a martingale solution
$({\tilde \Omega},{\tilde {\mathcal F}},{\tilde \P},{\tilde W},\tilde X)$
to \eqref{e1.3} with the following properties:\\
(i) if $g(x)=0$ then ${\tilde\P}$-a.s. $g(\tilde X(t))=0,\quad \forall t\ge0$;\\
(ii) if $g(x)\not=0$ and $\tau=\inf\{t>0: g(\tilde X(t))=0\}$ then
\begin{equation}\label{e4.16a}
{\tilde\P}(\tau>t)\le \frac{\tilde C}{\alpha}\,(1-e^{-\tilde C\,t})^{-1}\,|g(x)|,\qquad \forall t>0,
\end{equation}
where $\tilde C=C_1\,C^\ast$, $C_1$ a positive constant independent
of $g$ and $\sigma$. If $C^\ast=0$ then
\[
{\tilde\P}(\tau>t)\le (\alpha\,t)^{-1}|g(x)|,\qquad \forall t>0.
\]
\end{theorem}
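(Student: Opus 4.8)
The plan is to read off both assertions directly from a martingale solution $(\tilde\Omega,\tilde{\mathcal F},\tilde\P,\tilde W,\tilde X)$ provided by Theorem~\ref{t2.1} (applicable since \eqref{e4.12} implies \eqref{eq:2.2}, and delivering the moment bound \eqref{eq:2.3} together with a drift $\tilde\eta$, $\tilde\eta(s)\in F(\tilde X(s))$ a.e.), by applying It\^o's formula to $g(\tilde X)$ and to a smoothing of $|g(\tilde X)|$. I would use only two features of the Filippov map of the field \eqref{e2.11n}: since $f$ is continuous off $\{g=0\}$, $F(r)=\{f_1(r)\}$ on $\{g>0\}$ and $F(r)=\{f_2(r)\}$ on $\{g<0\}$, while $F(r)\subset[f_1(r),f_2(r)]$ on $\{g=0\}$; and, by continuity, $\nabla g\cdot f_1\ge\alpha$ on $\{g\ge0\}$, $\nabla g\cdot f_2\le-\alpha$ on $\{g\le0\}$. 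Put $\kappa:=\tilde C=C_1C^\ast$, with $C_1$ the dimensional constant in $|\tfrac12\mathrm{tr}(D^2g(r)\,\sigma(r)\sigma^\ast(r))|\le C_1C^\ast|g(r)|$, the latter coming from \eqref{e4.16}. All the stochastic integrals below have integrand $\le|\nabla g|_\infty|\sigma(\tilde X)|$, hence by the linear growth of $\sigma$ and \eqref{eq:2.3} are true $L^2$-martingales on $[0,T]$, so expectations of It\^o differentials may be taken freely.

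For (ii) I would take first $g(x)>0$ and set $\tau=\inf\{t>0:g(\tilde X(t))=0\}$. On $[0,\tau)$ one has $g(\tilde X(t))>0$, so $\tilde\eta(t)=f_1(\tilde X(t))$; applying It\^o to $t\mapsto e^{-\kappa t}g(\tilde X(t))$ and using $\nabla g\cdot f_1\ge\alpha$ together with $\tfrac12\mathrm{tr}(D^2g\,\sigma\sigma^\ast)\le\kappa g$ on $\{g>0\}$, the drift of $e^{-\kappa t}g(\tilde X(t))$ is $\le e^{-\kappa t}\bigl(-\kappa g(\tilde X(t))-\alpha+\kappa g(\tilde X(t))\bigr)=-\alpha e^{-\kappa t}$ for $t<\tau$. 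Stopping at $t\wedge\tau$, taking expectations and dropping the nonnegative term $\tilde\E[e^{-\kappa(t\wedge\tau)}g(\tilde X(t\wedge\tau))]$ (since $g(\tilde X(\tau))=0$ and $g(\tilde X(t))>0$ on $\{\tau>t\}$) gives $\alpha\,\tilde\E\int_0^{t\wedge\tau}e^{-\kappa s}\,ds\le g(x)$, hence $\alpha\,\tilde\P(\tau>t)\int_0^t e^{-\kappa s}\,ds\le g(x)$; since $\int_0^t e^{-\kappa s}\,ds=\kappa^{-1}(1-e^{-\kappa t})$ this is \eqref{e4.16a}, and $C^\ast=\kappa=0$ yields the stated $(\alpha t)^{-1}|g(x)|$. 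The case $g(x)<0$ is symmetric (replace $g$ by $-g$, so $\tilde\eta=f_2(\tilde X)$ on $[0,\tau)$ and \eqref{e4.15} plays the role of \eqref{e4.14}); thus for every $x$ with $g(x)\ne0$ the bound holds with $|g(x)|$ on the right.

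For (i), with $g(x)=0$, I would use $\varphi_\delta(r)=\sqrt{r^2+\delta^2}-\delta\in C^\infty$ ($0\le|\varphi_\delta'|\le1$, $\varphi_\delta''\ge0$, $\varphi_\delta'(0)=0$, $|r|-\delta\le\varphi_\delta(r)\le|r|$). It\^o's formula for $\varphi_\delta(g(\tilde X))$, after taking expectations and using $\varphi_\delta(g(x))=0$, gives
\[
\tilde\E\,\varphi_\delta(g(\tilde X(t)))=\tilde\E\!\int_0^t\!\Big[\varphi_\delta'(g(\tilde X))\big(-\nabla g\cdot\tilde\eta+\tfrac12\mathrm{tr}(D^2g\,\sigma\sigma^\ast)(\tilde X)\big)+\tfrac12\varphi_\delta''(g(\tilde X))\,|\sigma^\ast(\tilde X)\nabla g(\tilde X)|^2\Big]ds.
\]
The key sign fact is $\varphi_\delta'(g(\tilde X(s)))\big(-\nabla g(\tilde X(s))\cdot\tilde\eta(s)\big)\le-\alpha|\varphi_\delta'(g(\tilde X(s)))|$ a.e.: on $\{g(\tilde X)>0\}$ use $\varphi_\delta'>0$, $\tilde\eta=f_1(\tilde X)$, $-\nabla g\cdot f_1\le-\alpha$; on $\{g(\tilde X)<0\}$ use $\varphi_\delta'<0$, $\tilde\eta=f_2(\tilde X)$, $-\nabla g\cdot f_2\ge\alpha$; on $\{g(\tilde X)=0\}$ the factor $\varphi_\delta'(0)=0$ kills the term whatever selection $\tilde\eta\in[f_1,f_2]$ is made. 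With $|\tfrac12\mathrm{tr}(D^2g\,\sigma\sigma^\ast)(\tilde X)|\le\kappa|g(\tilde X)|\le\kappa\varphi_\delta(g(\tilde X))+\kappa\delta$ and the notation $h_\delta(t)=\tilde\E\,\varphi_\delta(g(\tilde X(t)))$, $R_\delta(t)=\tfrac12\tilde\E\int_0^t\varphi_\delta''(g(\tilde X))|\sigma^\ast(\tilde X)\nabla g(\tilde X)|^2\,ds$, this yields $h_\delta(t)\le\kappa\int_0^t h_\delta(s)\,ds+\kappa\delta t+R_\delta(t)$. Granting $R_\delta(T)\to0$ as $\delta\to0$, Gronwall gives $h_\delta(t)\le(\kappa\delta T+R_\delta(T))e^{\kappa T}$, hence $\tilde\E|g(\tilde X(t))|\le\delta+(\kappa\delta T+R_\delta(T))e^{\kappa T}\to0$; so $g(\tilde X(t))=0$ for a.e.\ $(t,\omega)$ and, by continuity of $t\mapsto g(\tilde X(t))$, $\tilde\P$-a.s.\ $g(\tilde X(t))=0$ for all $t\ge0$.

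The remaining point, $R_\delta(T)\to0$, is the heart of the matter and the step where \eqref{e4.16} is genuinely used; I expect it to be the main obstacle. Near $\{g=0\}$ one has $|D^2g|$ bounded below, so \eqref{e4.16} forces $|\sigma^\ast(r)\nabla g(r)|^2\le|\nabla g|_\infty^2|\sigma(r)|^2\le C|g(r)|$ there, while $\varphi_\delta''(g(\tilde X))$ is $O(\delta^2)$ away from that neighbourhood; the integrand in $R_\delta$ is then bounded by $C\,\delta^2|g(\tilde X)|\,(g(\tilde X)^2+\delta^2)^{-3/2}$ (up to a term tending to $0$ with $\delta$), a quantity uniformly bounded in $\delta$ and converging pointwise to $0$, so dominated convergence applies. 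The intuition is that without a bound $|\sigma^\ast\nabla g|^2\lesssim|g|$ in the layer around the switching surface the diffusion could carry trajectories off $\{g=0\}$ and both the invariance (i) and the clean exponential rate in (ii) would fail. Everything else — the identification of $\tilde\eta$ with $f_1$ or $f_2$ off the surface, the sign of the drift, the optional-stopping and Gronwall steps, and the martingale property of the stochastic integrals via \eqref{eq:2.3} — is routine.
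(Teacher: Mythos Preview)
Your route is close to the paper's but with two simplifications. First, you work directly with the martingale solution $(\tilde X,\tilde\eta)$ produced by Theorem~\ref{t2.1}, whereas the paper carries the whole computation at the level of the smooth approximation $X_\eps$ (drift $f_\eps$), derives a pathwise inequality for $|g(X_\eps)|$, and only then lets $\eps\to0$. Second, for (ii) you apply It\^o to $e^{-\kappa t}g(\tilde X)$ and stop at $\tau$, treating $g(x)>0$ and $g(x)<0$ separately; this sidesteps the smoothing of $|\cdot|$ altogether and is cleaner than the paper's derivation, which goes through a $C^2$ approximation $\varphi_\lambda$ of $r\mapsto|r|$ and reads off both (i) and (ii) from the resulting supermartingale inequality \eqref{e4.22} for $e^{-\tilde C t}|g(\tilde X(t))|$. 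Your optional--stopping computation leading to \eqref{e4.16a} is correct.

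For (i), however, your justification of $R_\delta(T)\to0$ has a genuine gap. The assertion ``near $\{g=0\}$ one has $|D^2 g|$ bounded below'' is not among the hypotheses and is false in general: if $g$ is affine (e.g.\ $g(r)=a\cdot r$, precisely as in the paper's own example $g(r_1,r_2)=a_2r_1+r_2$) then $D^2g\equiv0$, condition \eqref{e4.16} holds trivially for \emph{every} $\sigma$, and it yields no control whatsoever on $|\sigma^\ast\nabla g|^2$ near the switching surface. In that regime your dominated--convergence bound on the integrand $\varphi_\delta''(g(\tilde X))\,|\sigma^\ast(\tilde X)\nabla g(\tilde X)|^2$ collapses; $R_\delta$ is, up to constants, the local time at $0$ of the real semimartingale $t\mapsto g(\tilde X(t))$, and there is no reason for it to vanish unless the quadratic variation of $g(\tilde X)$ degenerates on $\{g(\tilde X)=0\}$, i.e.\ unless $\sigma^\ast(r)\nabla g(r)=0$ whenever $g(r)=0$ --- a condition not implied by \eqref{e4.16}. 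It is only fair to note that the paper is silent at the very same point: in passing from \eqref{e4.20} to \eqref{e4.21} the contribution $\tfrac12\varphi_\lambda''(g(X_\eps))\,|\sigma^\ast(X_\eps)\nabla g(X_\eps)|^2$ (which has the wrong sign for the desired upper bound) is dropped without comment, and this is exactly the term that becomes the local time as $\lambda\to0$. So your ``heart of the matter'' is correctly located, but neither your argument nor the stated hypotheses \eqref{e4.12}--\eqref{e4.16} suffice to dispose of it.
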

Theorem \ref{t4.2} amounts to say that the manifold $\Sigma=\{x: g(x)=0\}$
is invariant for stochastic system \eqref{e1.3} with $f$ given by \eqref{e2.11n} and
that for $x\notin \Sigma$ the solution $\tilde X$ have reached the manifold $\Sigma$ by
time $t$ with a probability greater or equal to $1-(\alpha\,t)^{-1}\,|g(x)|$. In the classical
automatic control terminology (see, e.g., \cite{U}) this means that $g(x)=0$ is a
``sliding mode" equation for system \eqref{e1.3} and $\Sigma$ is a switching surface
for this system. As a matter of fact this is typical ``sliding" mode behaviour for
the solution $X=\tilde X(t)$ and its dynamics has two phases: the first phase
is on time interval $(0,\tau)$ until $X$ reaches surface $\Sigma$ and the second one for
$t\ge\tau$ in which $X(t)$ evolves on sliding surface $\Sigma$. The reaching time
$\tau=\tau(\omega)$ is a stopping time determined by \eqref{e4.16a}.
\renewcommand{\proofname}{Proof of Theorem \ref{t4.2}}
\begin{proof}
We note first that the function $f$ can be written as
\[
f(r)=f_1(r)H(g(r))+f_2(r)H(-g(r)),\qquad \forall r\in\R^n
\]
where $H$ is the Heaviside function while the corresponding Filippov multivalued
function $F$ (see \eqref{e1.7}) is just
\[
F(y)=f_1(y)\tilde H(g(y))+f_2(y)\tilde H(-g(y)),\qquad \forall y\in\R^n
\]
and $\tilde H$ is the multivalued Heaviside function
\begin{equation}\label{eq:2.22}
\tilde H(r)=\begin{cases}
1&\mbox{for } r>0\\
[0,1]&\mbox{for } r=0\\
0&\mbox{for } r<0.
\end{cases}
\end{equation}
In the following we shall use the notations
\[
\text{sgn}(r)=H(r)-H(-r)=\begin{cases}1&\text{for }r>0\\-1&\text{for }r<0\end{cases}
\]
\[
\widetilde{\text{sgn}}(r)=\tilde H(r), \quad \forall r\in\R.\]
Let $\tilde X$ be the
martingale solution to \eqref{e1.3} given by \eqref{e4.8}
where $f$ is as in \eqref{e2.11n}. In order to prove the theorem we need
a few apriori estimates on the solution $X_\eps$ to
\eqref{e4.6} which will be obtained by applying It\^o's formula to the
function $\phi_\lambda(u)=\varphi_\lambda(g(u))$, where $\varphi_\lambda\in C^2(\R)$
is
\begin{equation}\label{e4.17}
\begin{cases}
\varphi_\lambda(0)=0\\
\varphi_\lambda'(y)=\frac{1}{\lambda}y&\mbox{for }|y|\le \lambda,\;\lambda>0\\
\varphi_\lambda'(y)=1+\lambda&\mbox{for }y\ge 2\lambda\\
\varphi_\lambda'(y)=-1-\lambda&\mbox{for }y\le -2\lambda\\
|\varphi_\lambda''(y)|\le \frac{C}{\lambda},&\mbox{for }|y|\le 2\lambda.
\end{cases}
\end{equation}
On $(0,\infty)$ such a function can be taken 
as 
\[
\varphi_\lambda(y)=
\begin{cases}
\frac1{2\lambda}\,y^2&\mbox{for }0\le y\le \lambda\\
(1+\lambda)\,y&\mbox{for }2\lambda\le y<\infty\\
p_\lambda(y)&\mbox{for }\lambda\le y<2\lambda
\end{cases}
\]
where $p_\lambda$ is a fourth order polynomial conveniently chosen and extend it by simmetry on $(-\infty,0)$. 
As a matter of fact, $\varphi_\lambda$ is a smooth approximation of function
$y\to|y|$ and as easily seen
\[
|\varphi_\lambda'(y)-(\widetilde{\mbox{sgn}})_\lambda(y)|\le C\,\lambda,\qquad \forall y\in\R,y\not=0,
\]
where $(\widetilde{\mbox{sgn}})_\lambda$ is the Yosida approximation of $\widetilde{\text{sgn}}$, that is
\[
(\widetilde{\mbox{sgn}})_\lambda(r)=\begin{cases}
\frac1{\lambda}\,|r|&\mbox{for } |r|<\lambda\\
1&\mbox{for } r>\lambda\\
-1&\mbox{for } r<-\lambda.
\end{cases}
\]
We have therefore for all $r\in\R\setminus \{0\}$
\[
\lim_{\lambda\to0} \varphi_\lambda'(r)=\text{sgn}(r)
\]
Taking into account that, $\forall u,v\in\R^n$, one has
\begin{align*}
\label{e4.18}&\nabla\varphi_\lambda(u)=\varphi_\lambda'(g(u))\nabla g(u),\\
&D^2\varphi_\lambda(u)(v)=\varphi_\lambda''(g(u))(\nabla g(u)\cdot v)\nabla g(u)+
\varphi_\lambda'(g(u))D^2g(u)(v)
\end{align*}
 we obtain that
\begin{equation}
\begin{aligned}
\label{e4.20}
d\varphi_\lambda(g(X_\eps(t))) &+\varphi_\lambda'(g(X_\eps(t)))f_\eps(X_\eps(t))\cdot
\nabla g(X_\eps(t))\,dt \\ 
=& \tfrac12\,\mbox{Tr}[\sigma^\ast(X_\eps(t))\sigma(X_\eps(t))D^2\phi_\lambda(X_\eps(t))]\,dt \\
&+\sigma(X_\eps(t))\,dW(t)\cdot\nabla\phi_\lambda(X_\eps(t)),
\qquad t\in [0,T].
\end{aligned}
\end{equation}
Now, taking into account that on $\{g(X_\eps(t))\ne 0\}$
\[
\lim_{\lambda\to0} \varphi_\lambda'(g(X_\eps(t)))=\text{sgn}(g(X_\eps(t)))
\]
and that
in virtue of \eqref{e4.16} and \eqref{e4.17},
\[
\begin{array}{ l}
\varphi_\lambda''(g(X_\eps))=0 \quad  \mbox{on } \{|g(X_\eps)|>2\lambda\} \\
|\varphi_\lambda'(g(X_\eps))|\le 2  \quad  \mbox{on } \{|g(X_\eps)|>2\lambda\}  \\
|D^2g(X_\eps)| |\sigma(X_\eps)|^2\le C^\ast\,|g(X_\eps)|   \quad \mbox{in }(0,T)\times\O\\
|\varphi_\lambda''(g(X_\eps))|\le C\,\lambda^{-1},
\qquad \forall\lambda>0,   
\end{array}
\]
letting $\lambda\to0$ in \eqref{e4.20}, we obtain that
\begin{equation}\label{e4.21}
\begin{aligned}
d|g(X_\eps(t))| &+ f_\eps(X_\eps(t))\cdot\nabla g(X_\eps(t))\,\mbox{sgn}(g(X_\eps(t))
\mathds1_{[|g(X_\eps(t))|>0]}\,dt
\\
=& \tfrac12\,\mbox{Tr}[\sigma^\ast(X_\eps(t))\sigma(X_\eps(t))D^2g(X_\eps(t))]\mbox{sgn}(g(X_\eps(t))\,dt
\\
&+ \sigma(X_\eps(t))dW(t)\cdot\nabla g(X_\eps(t))
\end{aligned}
\end{equation}
By \eqref{e4.7} and \eqref{e2.11n} we have
\[
\begin{aligned}
f_\eps(X_\eps(t))&\cdot\nabla g(X_\eps(t))\,\mbox{sgn}(g(X_\eps(t)))
\\
=& \int\limits_{[g(X_\eps(t)-\eps \theta)>0]}f_1(X_\eps(t)-\eps \theta)\cdot\nabla g(X_\eps(t))\rho(\theta)\,d\theta
\\
&+
\int\limits_{[g(X_\eps(t)-\eps \theta)<0]}f_2(X_\eps(t)-\eps \theta)\cdot\nabla g(X_\eps(t))\rho(\theta)\,d\theta
\end{aligned}
\]
and so taking into account \eqref{e4.12}--\eqref{e4.14} we get
\[
f_\eps(X_\eps(t))\cdot\nabla g(X_\eps(t))\,\mbox{sgn}(g(X_\eps(t)))\ge
\alpha-\delta(\eps)(1+|X_\eps(t)|)\quad \forall t\ge0
\]
where $\delta(\eps)\to 0$ as $\eps\to0$. Taking into account \eqref{e4.21} this yields
\[
\begin{split}
d|g(X_\eps(t))| &+ \alpha\,\mathds1_{[|g(X_\eps(t))|>0]}(1-\delta(\eps)|X_\eps(t)|)\,dt
\\
\le& \tfrac12\,\mbox{Tr}[\sigma^\ast(X_\eps(t))\sigma(X_\eps(t))D^2g(X_\eps(t))]\mbox{sgn}(g(X_\eps(t)))\,dt
\\
&+ 
\mathds1_{[|g(X_\eps(t))|>0]}|g(X_\eps(t))|^{-1}\nabla g(X_\eps(t))\cdot\sigma(X_\eps(t))\,dW(t)
\end{split}
\]
and therefore, for $0\le s\le t\le T$, we have $\P$-a.s.
\[
\begin{split}
|g(X_\eps(t))| &+ \alpha\,\int_s^t\mathds1_{[|g(X_\eps(\theta))|>0]}(1-\delta(\eps)|X_\eps(\theta)|)\,d\theta
\\
\le& |g(X_\eps(s))|+\int_s^t\mathds1_{[|g(X_\eps(\theta))|>0]}
|g(X_\eps(\theta))|^{-1}\nabla g(X_\eps(\theta))\cdot\sigma(X_\eps(\theta))\,dW(\theta)
\\
&+ \tfrac12\int_s^t\mbox{Tr}[\sigma^\ast(X_\eps(\theta))\sigma(X_\eps(\theta))D^2g(X_\eps(\theta))]\mbox{sgn}((g(X_\eps(\theta)))\,d\theta.
\end{split}
\]
The same inequality remains of course true for $(\tilde X_\eps,\tilde W)$ and so letting
$\eps\to 0$ we get that for $\tilde X$ given by \eqref{e4.8}, we have
\[
\begin{split}
|g(\tilde X(t))| &+ \alpha\int_s^t \mathds1_{[|g(\tilde X(\theta))|>0]}\,d\theta
\\
\le& |g(\tilde X(s))|+\tfrac12\int_s^t\mbox{Tr}[\sigma^\ast(\tilde X(\theta))\sigma(\tilde X(\theta))
D^2g(\tilde X(\theta))]\mbox{sgn}((g(\tilde X(\theta)))\,d\theta
\\
&+ \int_s^t \mathds1_{[|g(\tilde X(\theta))|>0]}\nabla g(\tilde X(\theta))\cdot\sigma(\tilde X(\theta))\,dW(\theta),
\quad 0\le s\le t<\infty,\;\tilde{\tilde\P}\mbox{-a.s.}
\end{split}
\]
Taking into account \eqref{e4.16} we get
\begin{multline*}
|g(\tilde X(t))|+\alpha\int_s^t \mathds1_{[|g(\tilde X(\theta))|>0]}\,d\theta 
\\
\le \tilde C\int_s^t |g(\tilde X(\theta))|\,d\theta+
\int_s^t \mathds1_{[|g(\tilde X(\theta))|>0]}\nabla g(\tilde X(\theta))\cdot\sigma(\tilde X(\theta))\,dW(\theta),
\end{multline*}
and so by the Gronwall lemma
\begin{equation}\label{e4.22}
\begin{split}
e^{-\tilde C\,t}|g(\tilde X(t))|+\alpha\int_s^t e^{-\tilde C\,\theta}\mathds1_{[|g(\tilde X(\theta))|>0]}\,d\theta\le
e^{-\tilde C\,s}|g(\tilde X(s))|+\hspace{1cm}\\
\int_s^t e^{-\tilde C\,\theta}\mathds1_{[|g(\tilde X(\theta))|>0]}\nabla g(\tilde X(\theta))\cdot\sigma(\tilde X(\theta))\,dW(\theta),
\quad 0\le s\le t<\infty.
\end{split}
\end{equation}
In particular, it follows by \eqref{e4.22} that if $g(x)=0$ then $g(\tilde X(t))=0$ $\tilde{\P}$-a.s. for all $t\ge 0$.
Moreover, by \eqref{e4.22} it follows that $Z(t)=|g(\tilde X(t))|e^{-\tilde C \,t}$ is a nonnegative super-martingale
and therefore for any couple of stopping times $\tau_1<\tau_2$ we have $Z(\tau_1)\ge Z(\tau_2)$.
This implies that if $\tau=\inf\{t>0 : |Z(t)|=0\}$ we have that $Z(t)=Z(\tau)$, $\tilde{\P}$-a.s. for $t>\tau$.
On the other hand, by \eqref{e4.22} and \eqref{e4.16} it follows that
\[
\E Z(t)+\alpha\int_0^t e^{-\tilde C\,s}\tilde{\P}(\tau>s)\,ds\le|g(x)|+
\tilde C\int_0^t\E Z(s)\,ds,\qquad \forall t\ge 0
\]
and therefore
\[
\tilde{\P}(\tau>t)\le \frac{\tilde C}{\alpha}\,(1-e^{-\tilde C\,t})^{-1}\,|g(x)|,\qquad \forall t>0,
\]
which is just \eqref{e4.16a}. This shows that $\tilde X(t)$ reaches the manifold $\Sigma$ in stopping time $\tau$ and remains there for $t>\tau$ with a probability $\tilde{\P}$ greater or equal $\frac{\tilde C}{\alpha}\,(1-e^{-\tilde C\,t})^{-1}\,|g(x)|$. The proof is complete.
\end{proof}
\renewcommand{\proofname}{Proof}
\begin{Remark}\em
If conditions \eqref{e4.14}, \eqref{e4.15} are satisfied with $\alpha=0$ in Theorem \ref{t4.2} then
only part (i) follows.
\end{Remark}
Theorem \ref{t4.2} can be used to design feedback controllers for stochastic differential systems
with a sliding mode dynamics on a given surface $\Sigma=\{x: g(x)=0\}$. Such an example is presented below.
\begin{Example}\em
Consider the controlled stochastic second order system
\begin{equation}
\ddot X +a_1\,\dot X=\sigma_0(X,\dot X)\dot\beta+u\qquad  \mbox{in }(0,\infty).
\end{equation}
We assume that $\sigma_0\in \mbox{Lip}(\R^2)$. 

Our aim is to find a feedback controller $u=-f_0(X,\dot X)$ such that the corresponding
closed loop system
\begin{equation}\label{e4.25}
\begin{array}{l }
\ddot X +a_1\,\dot X+f_0(X,\dot X)=\sigma_0(X,\dot X)\dot\beta \\
X(0)=x_0,\quad \dot X(0)=x_1
\end{array}
\end{equation}
has the sliding mode equation
\begin{equation}\label{e4.26}
a_2 X+\dot X=0,
\end{equation}
for some $a_2\in\R$. Here $\beta$ is a Brownian motion in a probability space $(\Omega,\mathcal F,\P)$
and $\dot\beta$ is the associated white noise.

We choose
\begin{equation}\label{e4.27}
f_0(r_1,r_2)=\alpha\,\mbox{sgn}(a_2\,r_1+r_2),\qquad \forall(r_1,r_2)\in\R^2
\end{equation}
where $\alpha>0$  and rewrite equation \eqref{e4.25} as
\begin{equation}\label{e4.28}
\begin{array}{ l}
dX_1-X_2\,dt=0 \\
dX_2+a_1\,X_2\,dt+\alpha\,\mbox{sgn}(a_2\,X_1+X_2)\,dt=\sigma_0(X_1,X_2)\,d\beta
\end{array}
\end{equation}
for $t\ge 0$, where as us usually $\mbox{sgn}(u)=\frac{u}{|u|}$ for $u\not=0$.\\
Equation \eqref{e4.28} is a ``jump'' system of the form \eqref{e1.3} where
\[
f(r_1,r_2)=\left(\begin{array}{c}
-r_2\\
a_1\,r_2+\alpha\,\mbox{sgn}(a_2\,r_1+r_2)
\end{array}\right),\qquad \forall (r_1,r_2)\in\R^2,
\]
\[
\sigma(r_1,r_2)=\left(\begin{array}{c}
0\\
\sigma_0(r_1,r_2)
\end{array}\right),\qquad \forall (r_1,r_2)\in\R^2,
\]
and so $f$ is of the form \eqref{e2.11n} where
\[
f_1(r)=\left(\begin{array}{c}
-r_2\\
a_1\,r_2+\alpha
\end{array}\right),\quad f_2(r)=\left(\begin{array}{c}
-r_2\\
a_1\,r_2-\alpha
\end{array}\right),\qquad r=(r_1,r_2)\in \R^2
\]
\[
g(r)=a_2\,r_1+r_2,\qquad r=(r_1,r_2).
\]
It is easily seen that conditions \eqref{e4.12}--\eqref{e4.16} hold and so Theorem \ref{t4.2} is applicable
to the present case. We get
\begin{Corollary}
The stochastic closed loop system \eqref{e4.28}, equivalently \eqref{e4.25}, \eqref{e4.27},
has 
the ``sliding mode" \eqref{e4.26}. More precisely, for every
$(x_0,x_1)\in \R^2$ there is a martingale solution $(X_1(t),X_2(t))$ which reaches the surface
$\Sigma=\{(x_1,x_2): a_2\,x_1+x_2=0\}$ in time $t$ with a probability $\ge 1-(\alpha\,t)^{-1}|a_2\,x_0+x_1|$,
and remains $\tilde{\P}$-a.s. on this surface after that time.
\end{Corollary}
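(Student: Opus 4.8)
The plan is to obtain the Corollary as a direct application of Theorem~\ref{t4.2}: it suffices to check that the data of system \eqref{e4.28} fit the template \eqref{e2.11n} and satisfy the regularity and structural hypotheses \eqref{e4.12}--\eqref{e4.16}, and then to transcribe conclusions (i) and (ii) of that theorem into the present notation. No new probabilistic argument is needed; the whole proof is an algebraic verification followed by a citation of Theorem~\ref{t4.2}.

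First I would record the identifications made just above the Corollary: \eqref{e4.28} is of the form \eqref{e1.3} with $f$ as in \eqref{e2.11n}, where $f_1(r)=(-r_2,\,a_1r_2+\alpha)$, $f_2(r)=(-r_2,\,a_1r_2-\alpha)$, $g(r)=a_2r_1+r_2$, and $\sigma(r)=(0,\sigma_0(r_1,r_2))$ with $\sigma_0\in\mbox{Lip}(\R^2)$. Here $g\in C^2(\R^2)$ (in fact affine), $f_1,f_2\in C^1(\R^2,\R^2)$, $\sigma\in\mbox{Lip}(\R^2,\mathcal L(\R^2,\R^2))$, so the standing smoothness assumptions of Theorem~\ref{t4.2} hold. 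Next I would verify \eqref{e4.12}--\eqref{e4.16}. Since $f_1$ and $f_2$ are affine, \eqref{e4.12} holds with constants depending only on $a_1$ and $\alpha$. Since $g$ is affine, $\nabla g(r)\equiv(a_2,1)$ and $D^2g(r)\equiv0$; hence \eqref{e4.13} is immediate, and \eqref{e4.16} holds with $C^{\ast}=0$ because $|D^2g(r)|\,|\sigma(r)|^2\equiv0$. The reaching conditions \eqref{e4.14}, \eqref{e4.15} reduce, via the computation $\nabla g(r)\cdot f_1(r)=(a_1-a_2)r_2+\alpha$ and $\nabla g(r)\cdot f_2(r)=(a_1-a_2)r_2-\alpha$, to absorbing the cross term $(a_1-a_2)r_2$; with the natural choice $a_2=a_1$ this term vanishes and $\nabla g\cdot f_1\equiv\alpha$, $\nabla g\cdot f_2\equiv-\alpha$, so \eqref{e4.14}, \eqref{e4.15} hold (with equality). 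This is the only place where the particular form of the feedback \eqref{e4.27} is used.

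With \eqref{e4.12}--\eqref{e4.16} verified and $C^{\ast}=0$, Theorem~\ref{t4.2} applies. Its part (i) gives that if $g(x_0,x_1)=a_2x_0+x_1=0$ then $\tilde\P$-a.s. $a_2X_1(t)+X_2(t)=0$ for all $t\ge0$, i.e. $\Sigma=\{(x_1,x_2):a_2x_1+x_2=0\}$ is invariant and the solution satisfies the sliding mode equation \eqref{e4.26}. For $(x_0,x_1)\notin\Sigma$, part (ii) with $C^{\ast}=0$ gives, for $\tau=\inf\{t>0:g(\tilde X(t))=0\}$, the estimate $\tilde\P(\tau>t)\le(\alpha t)^{-1}|a_2x_0+x_1|$, hence $\tilde\P(\tau\le t)\ge1-(\alpha t)^{-1}|a_2x_0+x_1|$; and, as established inside the proof of Theorem~\ref{t4.2}, $Z(t)=|g(\tilde X(t))|$ (since $\tilde C=C_1C^{\ast}=0$ here) is a nonnegative supermartingale that stays at $0$ after $\tau$, so $\tilde X$ remains on $\Sigma$ for $t\ge\tau$. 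Since the martingale solution $(X_1,X_2)$ of \eqref{e4.28} is, by construction, the martingale solution of the equivalent closed loop system \eqref{e4.25}, \eqref{e4.27}, this is precisely the assertion of the Corollary.

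I do not expect a real obstacle: the statement is a corollary in the literal sense, its only content being the substitution into \eqref{e4.12}--\eqref{e4.16}. The one point deserving care is the reaching inequality \eqref{e4.14}--\eqref{e4.15}, where the term $(a_1-a_2)r_2$, measuring the mismatch between the open loop damping coefficient $a_1$ and the slope $a_2$ of the sliding surface, must be dominated; the choice $a_2=a_1$ (under which the reduced dynamics on $\Sigma$ is $\dot X=-a_1X$) makes it disappear, and then Theorem~\ref{t4.2} does all the rest.
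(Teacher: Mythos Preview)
Your proposal is correct and takes essentially the same approach as the paper: verify that the data of the example satisfy \eqref{e4.12}--\eqref{e4.16} and then invoke Theorem~\ref{t4.2} (with $C^\ast=0$ since $D^2g\equiv0$). Your careful observation that \eqref{e4.14}--\eqref{e4.15} force the choice $a_2=a_1$ is a point the paper glosses over with ``it is easily seen'', though the paragraph immediately following the Corollary indeed specializes to $a_1\,X+\dot X=0$, confirming that this is the intended reading.
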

This describes a typical ``sliding-mode" behaviour for solutions $X$ to \eqref{e4.25}, namely
\[
a_1\,X(t)+\dot X(t)=0
\]
on $(t_0,\infty)\times\Omega_0$ where $\tilde{P}(\Omega_0)\ge 1-(\alpha\,t_0)^{-1}|a_2\,x_0+x_1|$.
(We refer to \cite{H}, \cite{SX}, \cite{SK},  for references and other significant results on ``sliding-mode" behaviour of
stochastic differential systems).
\end{Example}

\section{Existence of a weak solution to heat equation \eqref{e1.4}}
The following hypotheses will be assumed throughout in the sequel.
\begin{description}
  \item[i)] $f\in L^\infty_{\text{\tiny loc}}(\R)$ and $|f(r)|\le a_1 |r|+b_1$, $\forall r\in\R$
  \item[ii)] $W$ is the cylindrical Wiener process \eqref{e1.2} where $\{e_j\}_{j=1}^\infty$
  is an orthonormal basis in $L^2(\Omega)$ given by $-\Delta e_j=\lambda_je_j$ in $\O$;
  $e_j=0$ on $\bO$ and
  \begin{equation}\label{e2.1}
  \sum_{j=1}^\infty \mu_j^2\,\lambda_j^2>\infty
  \end{equation}
  \item[iii)] $b\in C^2(\R)\cap $Lip$(\R)$.
\end{description}

\begin{Definition}\label{d3.1}
Let $x\in L^2(\O)$. We call weak (martingale) solution to \eqref{e1.1} a tuple
$(\Omega,\mathcal F,(\mathcal F_t)_{t\ge 0},\P,W,X)$, where $(\Omega,\mathcal F,(\mathcal F_t)_{t\ge 0},\P)$
is a filtered probability space where there are defined a $(\mathcal F_t)_{t\ge 0}$-Wiener process $W$
and a continuous $(\mathcal F_t)_{t\ge 0}$-adapted $L^2(\O)$-valued process $X=(X(t))_{t\ge0}$
such that, $\P$-a.s.,
\begin{equation}\label{e2.2}
X(t)=e^{-tA}x+\int_0^t e^{-(t-s)A}\eta(s)\,ds+\int_0^t e^{-(t-s)A}\,b(X(s))\,dW(s),
\end{equation}
where $\eta\in L^\infty(\ter)$ is a $(\mathcal F_t)_{t\ge 0}$-adapted process such that
\begin{equation}\label{e2.3}
\eta\in F(X), \mbox{a.e in } \ter.
\end{equation}
\end{Definition}
Here $A=-\Delta$ with $D(A)=H_0^1(\O)\cap H^2(\O)$ and $e^{-A\,t}$ is the $C_0$-semigroup on
$L^2(\O)$ generated by $-A$.

We note that the linear operator $b(X)$ arising in \eqref{e2.2} is defined by
\[
b(X) h=\sum_{j=1}^\infty \mu_j\,b(X) \<h,e_j>_2\,e_j,\qquad \forall h\in L^2(\O).
\]

\paragraph{The construction of a weak (martingale) solution.}
We consider the approximating equation
\begin{equation}\label{e2.4}
\begin{array}{l}
dX_\eps-\Delta X_\eps\,dt+f_\eps(X_\eps)\,dt=b(X_\eps)\,dW,\quad \text{in }(0,T)\times\O\\
X_\eps=0,\hspace{6.05cm}\text{on }(0,T)\times\bO\\
X_\eps(0,\xi)=x(\xi),\hspace{4.65cm}\xi\in \O
\end{array}
\end{equation}
where $\eps>0$ and, as in the finite dimensional case (see \eqref{e4.7}),
\begin{equation}\label{e2.5}
f_\eps(r)=\frac{1}{\eps}\int_{-\infty}^{\infty}f(s)\,\rho(\tfrac{r-s}\eps)\,ds=
\int_{-\infty}^{\infty}f(r-\eps\,\theta)\,\rho(\theta)\,d\theta,\quad \forall r\in \R.
\end{equation}
Here $\rho\in C^\infty_{\text{\tiny0}}(\R)$ is such that
\begin{equation}\label{e2.6}
\rho(\theta)\ge0\quad\rho(\theta)=\rho(-\theta),\quad \rho(\theta)=0\mbox{ for }|\theta|\ge 1,
\quad \int_{-\infty}^\infty\rho(\theta)\,d\theta=1.
\end{equation}
Clearly by (i) we have
\begin{equation}\label{e2.7}
f_\eps\in C^1(\R),\qquad |f_\eps(r)|\le a_1\,|r|+b_1+a_1\,\eps,\quad \forall r\in\R, \eps>0.
\end{equation}
By standard existence theory for infinite dimensional stochastic equations with Lipschitz nonlinearity
it follows that \eqref{e2.4} has a unique strong solution
\begin{equation}
X_\eps\in L^2(\Omega;C([0,T];L^2(\O)))\cap L^2(\Omega;L^2(0,T;H_0^1(\O))),
\end{equation}
see \cite[pag.45]{DZ}. By It\^o's formula we get $\P$-a.s.
\[
\begin{split}
\tfrac12\,|X_\eps(t)|_2^2+\int_0^t\|X_\eps(s)\|_1^2\,ds+
\int_0^t \langle f_\eps(X_\eps(s)),X_\eps(s)\rangle_2\,ds\hspace{3cm}\\
=\tfrac12\,|x|_2^2+\tfrac12\int_0^t\sum_{j=1}^\infty\mu_j^2| b(X_\eps(s))\,e_j|_2^2\,ds
+\int_0^t\langle b(X_\eps(s))dW(s),X_\eps(s)\rangle_2\,ds,\\
\forall t\in [0,T],
\end{split}
\]
and so by the Burkholder-Davis-Gundy formula we obtain by some calculation
involving (i)--(iii)
\begin{equation}\label{e2.9}
\E\sup_{t\in[0,T]}|X_\eps(t)|_2^2+\E\int_0^t\|X_\eps(s)\|_1^2\,ds\le C\,
(|x|_2^2+1),\quad \forall\eps>0,
\end{equation}
where $C$ is independent of $\eps$. By \eqref{e2.7} we also have
\[
\E\sup_{t\in[0,T]}|f_\eps(X_\eps(t))|_2^2\le C\,(|x|_2^2+1).
\]
Then on a subsequence, again denoted in the same way, we have for $\eps\to0$
\begin{equation}\label{e2.10}
\begin{split}
X_\eps\to X\quad \mbox{weak-star in }L^\infty(0,T;L^2(\Omega;L^2(\O))\\
\mbox{weakly in }L^2(0,T;L^2(\Omega;H_0^1(\O))
\end{split}
\end{equation}
\begin{equation}
f_\eps\to \eta\quad \mbox{weak-star in } L^\infty((0,T);L^2(\Omega;L^2(\O)))
\end{equation}
\begin{equation}\label{e2.12}
X_\eps(t)\to X(t)\quad \mbox{weakly in } L^2(\Omega;L^2((0,T)\times\O))
\end{equation}
\[
b(X_\eps)\to b^\ast\quad \mbox{weakly in }L^2(\Omega;L^2((0,T)\times\O))
\]and
\begin{equation}
\begin{array}{l}
dX-\Delta X\,dt+\eta\,dt=b^\ast\,dW\quad \mbox{in } (0,T)\times\O,
\\
X(0)=x\hspace{4.4cm}\mbox{in }\O,
\\
X=0\hspace{5cm}\mbox{on } (0,T)\times\bO,
\end{array}
\end{equation}
that is
\[
X(t)-\int_0^t\Delta X(s)\,ds+\int_0^t\eta(s)\,ds=
x+\int_0^t b^\ast(s)\,dW(s),\quad \forall t\in [0,T],\;\P\mbox{-a.s.}
\]
Assume now that $x\in H_0^1(\O)$. Then by an application of  It\^o's formula in \eqref{e2.4}
to the function $x\to \tfrac12\,\|x||_1^2$ we get for some $C_1$, $C_2\ge 0$,
\[
\begin{split}
\tfrac12\,\|X_\eps(t)\|_1^2+\int_0^t |\Delta X_\eps(s)|_2^2\,ds\le \tfrac12\,\|x\|_1^2+
C_1\int_0^t(|\Delta X_\eps(s)|_2^2+|X_\eps(s)|_2^2)\,ds\\
+\tfrac12\int_0^t \sum_{j=1}^\infty\mu_j^2\,\|\sigma(X_\eps)\,e_j\|_1^2\,ds+
\int_0^t \langle \Delta b(X_\eps(s)),dW(s)\rangle\ds+C_2
\end{split}
\]
and in virtue of (ii), (iii)  this yields via Burkholder-Davis-Gundy formula
\begin{equation}\label{e2.14}
\E\sup_{t\in[0,T]}\|X_\eps(t)\|_1^2+\E\int_0^t |\Delta X_\eps(s)|_2^2\,ds\le
C(\|x\|_1^2+1),\quad \forall\eps>0.
\end{equation}
(Everywhere in the sequel we shall denote by $C$ several constants
independent of $\eps$.)\\
Then, in this case besides \eqref{e2.10}, \eqref{e2.12}, we also have
\begin{equation}
X\in C([0,T];L^2(\O))\cap L^2(\Omega;L^\infty(0,T;H_0^1(\O)))\cap L^2(\Omega;L^2(0,T;H^2(\O))).
\end{equation}
Since the weak convergences \eqref{e2.10}-\eqref{e2.12} are not sufficient
to conclude that \eqref{e2.3} holds, then proceeding as in the proof of Theorem
\ref{t2.1} we shall replace $\{X_\eps\}$ by a sequence
$\{\tilde X_\eps\}$ of processes defined in a probability space
$\{\tilde\Omega,\tilde{\mathcal F},\tilde\P,\tilde W\}$ such that
$\mathcal L(X_\eps)=\mathcal L(\tilde X_\eps)$ where $\mathcal L$ is the law of the process.

To this end, consider the sequence $\{\nu_\eps\}_{\eps\ge0}$ of probability measures,
$\nu_\eps=\mathcal L(X_\eps)$, that is $\nu_\eps(B)=\P(X_\eps\in B)$
for any Borelian set $B\subset C([0,T];L^2(\O))$. We have
\begin{Lemma}
Let $x\in H_0^1(\O)$. Then the sequence $\{\nu_\eps\}_{\eps>0}$ is tight in the space $C([0,T];L^2(\O))$.
\end{Lemma}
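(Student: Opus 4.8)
The goal is to establish tightness of $\{\nu_\eps\}$ in $C([0,T];L^2(\O))$ when $x\in H_0^1(\O)$. As in the finite dimensional Theorem \ref{t2.1}, I would construct a family of compact sets $K_{R,\alpha}\subset C([0,T];L^2(\O))$ of the form
\[
K_{R,\alpha}=\Big\{y\in C([0,T];H_0^1(\O)):\ \sup_{t\in[0,T]}\|y(t)\|_1\le R,\ \sup_{s\ne t}\frac{|y(t)-y(s)|_2}{|t-s|^{\alpha}}\le R\Big\},
\]
for suitable $\alpha\in(0,\tfrac12)$, and show that these are compact in $C([0,T];L^2(\O))$ and that $\nu_\eps(K_{R,\alpha}^c)\le\delta$ for $R$ large, uniformly in $\eps$. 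Compactness follows from a standard Aubin--Lions / Ascoli-type argument: bounded sets of $H_0^1(\O)$ are relatively compact in $L^2(\O)$ by Rellich's theorem, and the uniform Hölder bound gives equicontinuity, so the Arzelà-Ascoli theorem applies with values in the compact metric space obtained from a closed bounded ball of $H_0^1(\O)$ sitting compactly in $L^2(\O)$.

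**Key estimates.** The probabilistic input is two moment bounds, both already essentially available. First, \eqref{e2.14} gives $\E\sup_{t\in[0,T]}\|X_\eps(t)\|_1^2\le C(\|x\|_1^2+1)$ uniformly in $\eps$; combined with the Markov-type inequality $\rho\,\P[\|Y\|\ge\rho]\le\E\|Y\|$ used in the proof of Theorem \ref{t2.1}, this controls the first defining constraint of $K_{R,\alpha}$. Second, I need a uniform-in-$\eps$ bound of the form $\E|X_\eps(t)-X_\eps(s)|_2^{p}\le C|t-s|^{1+\beta}$ for some $p\ge 1$ and $\beta>0$, which by the Kolmogorov--Chentsov continuity criterion yields the Hölder constraint. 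To get it I would write the mild formulation \eqref{e2.2} for $X_\eps$ and estimate the three increments separately: the semigroup term $(e^{-(t-s)A}-I)e^{-sA}x$ is controlled in $L^2(\O)$ by $C|t-s|^{1/2}\|x\|_1$ using $x\in H_0^1(\O)=D(A^{1/2})$ and the bound $\|(e^{-\tau A}-I)A^{-1/2}\|_{L(L^2)}\le C\tau^{1/2}$; the drift term uses \eqref{e2.9} (or \eqref{e2.14}) and $|f_\eps(X_\eps)|_2\le a_1|X_\eps|_2+b_1+a_1\eps$ together with $\|e^{-(t-\theta)A}\|\le 1$; and the stochastic convolution increment is handled by the Burkholder--Davis--Gundy inequality together with the Hilbert--Schmidt estimate on $b(X_\eps)$, which requires (ii) (the condition $\sum_j\mu_j^2\lambda_j^2<\infty$, or at least $\sum_j\mu_j^2<\infty$) and the Lipschitz bound on $b$ from (iii), giving a factor $|t-s|^{\beta}$ for a suitable $\beta>0$ after raising to a high enough power $p$.

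**Main obstacle.** The delicate point is the stochastic convolution: a naive $L^2$ estimate of $\int_0^t e^{-(t-s)A}b(X_\eps(s))\,dW(s)$ only gives continuity in a space larger than $L^2(\O)$ unless one exploits the smoothing of the semigroup and the summability in (ii). The cleanest route is the factorization method of Da Prato--Kwapień--Zabczyk: write the stochastic convolution as $\frac{\sin\pi\gamma}{\pi}\int_0^t (t-s)^{\gamma-1}e^{-(t-s)A}Y_\eps(s)\,ds$ with $Y_\eps(s)=\int_0^s(s-r)^{-\gamma}e^{-(s-r)A}b(X_\eps(r))\,dW(r)$, bound $\E\|Y_\eps\|_{L^p(0,T;L^2(\O))}^p$ uniformly in $\eps$ using BDG plus (ii)--(iii) and \eqref{e2.9}, and then use that the convolution operator maps $L^p(0,T;L^2(\O))$ into $C^{\theta}([0,T];L^2(\O))$ for $\theta<\gamma-\tfrac1p$. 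This simultaneously delivers the needed Hölder modulus and re-proves the pathwise continuity. Once the two moment bounds are in hand, choosing $R$ so that each of the two ``bad'' events has probability at most $\delta/2$ (uniformly in $\eps$, by the uniform constants) gives $\nu_\eps(K_{R,\alpha}^c)\le\delta$, completing the proof. I would also remark that the same argument, with $|x|_2$ replacing $\|x\|_1$ and the weaker bound \eqref{e2.9}, gives tightness in, say, $C([0,T];H^{-s}(\O))$ for $x\in L^2(\O)$, which is what is actually needed in the next section, but for the stated lemma the assumption $x\in H_0^1(\O)$ is what makes the semigroup increment Hölder-$\tfrac12$ in $L^2(\O)$.
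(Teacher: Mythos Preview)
Your approach is correct but takes a genuinely different route from the paper's. The paper does not use the mild formulation at all: it applies It\^o's formula to $t\mapsto|X_\eps(t)-X_\eps(s)|_2^2$ in the variational setting and, using \eqref{e2.7}, \eqref{e2.9}, \eqref{e2.14}, obtains in one line
\[
\E|X_\eps(t)-X_\eps(s)|_2^2\le C\int_s^t\bigl(|X_\eps(\theta)|_2^2+|\nabla X_\eps(\theta)|_2^2\bigr)\,d\theta\le C\,|t-s|,
\]
then combines this with the $H_0^1$ bound \eqref{e2.14} and Markov's inequality to conclude. This is considerably shorter than the factorization argument and exploits the variational structure of \eqref{e2.4} directly; it also makes transparent why the assumption $x\in H_0^1(\O)$ enters (it is needed for \eqref{e2.14}, which in turn supplies the gradient bound in the display above). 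Your route, by contrast, goes through the mild formulation and the factorization method, which is heavier machinery but has the advantage of delivering a genuine $L^p$ increment bound $\E|X_\eps(t)-X_\eps(s)|_2^{p}\le C|t-s|^{1+\beta}$ with $\beta>0$, so that Kolmogorov--Chentsov applies cleanly. The paper's second-moment estimate, as written, is at the borderline of what Kolmogorov--Chentsov requires; to make its conclusion fully rigorous one should iterate It\^o's formula to get $\E|X_\eps(t)-X_\eps(s)|_2^{2p}\le C|t-s|^{p}$ for some $p>1$, which is routine but not spelled out. In that sense your proof is more careful, while the paper's is more economical.
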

\begin{proof}
This means that for each $\delta>0$ there is a compact subset $B$ of $C([0,T];L^2(\O))$
such that $\nu_\eps(B^c)\le \delta$ for all $\eps>0$. We take for $r>0$, $\gamma>0$,
\[
\begin{split}
B=B_{r,\gamma}=\{y\in C([0,T];L^2(\O)): |y(t)|_2\le r, \forall t\in[0,T],\hspace{2cm}\\
\|y\|_{L^\infty(0,T;H_0^1(\O))}\le r,|y(t)-y(s)|_2\le \gamma \,|t-s|^{\tfrac12},\quad \forall t,s\in[0,T]\}
\end{split}
\]
Clearly, by Ascoli-Arzel\`a theorem, $B_{r,\gamma}$ is compact in $C([0,T];L^2(\O))$.
On the other hand, by \eqref{e2.4} we have via It\^o's formula applied to the process
$t\to|X_\eps(t)-X_\eps(s)|_2^2$
\[
\begin{split}
\tfrac12\,\E |X_\eps(t)-X_\eps(s)|_2^2+\E\int_s^t\langle \nabla X_\eps(\theta),\nabla (X_\eps(\theta)-X_\eps(s)) \rangle_2\,d\theta\\+
\E\int_s^t\langle f_\eps(X_\eps(\theta)),X_\eps(\theta)-X_\eps(s)\rangle_2\,d\theta\le C\,\E\int_s^t|X_\eps(\theta)|_2^2\,d\theta
\quad 0\le s\le t\le T.
\end{split}
\]
Taking into account estimates \eqref{e2.7}, \eqref{e2.9} we obtain via Gronwall's lemma that
\begin{equation}\label{e2.16}
\E|X_\eps(t)-X_\eps(s)|_2^2\le C\int_s^t(|X_\eps(\theta)|_2^2+|\nabla X_\eps(\theta)|_2^2)\,d\theta\le C\;|t-s|.
\end{equation}
By estimates \eqref{e2.9}, \eqref{e2.14}, \eqref{e2.16} and taking into account that
\[
\rho\;\P[|Y|\ge\rho] \le \E|Y|,\quad \forall \rho>0,
\]
we infer that there are $\gamma$, $r$ independent of $\eps$ such that
$\nu_\eps(B_{r,\gamma}^c)\le\delta$, as desired.
\end{proof}
Then by the Skorohod theorem (see, e.g., Theorem 2.4 in \cite{DZ}) there are a probability space
$(\tilde\Omega,\tilde{\mathcal F},\tilde\P)$ and the stochastic processes $\tilde X$, $\{\tilde X_\eps\}_{\eps>0}$
on $(\tilde\Omega,\tilde{\mathcal F},\tilde\P)$ such that the law $\mathcal L(\tilde{X_\eps})$ of $\tilde X_\eps$ coincides with $\mathcal L(X_\eps)$
and $\P$-a.s.
\begin{equation}\label{e2.17}
\tilde X_\eps\to \tilde X \quad \mbox{in } C([0,T];L^2(\O))
\end{equation}
as $\eps\to 0$. We have also $\mathcal L(X)=\mathcal L(\tilde X)$.
Since
$\mathcal L(f_\eps(X_\eps))=\mathcal L(f_\eps(\tilde{X_\eps}))$,
$\mathcal L(\sigma(X_\eps))=\mathcal L(\sigma(\tilde{X_\eps}))$
by \eqref{e2.17} and \eqref{e2.5} we see that
\begin{equation}
\begin{array}{l}
f_\eps(\tilde X_\eps)\to \tilde \eta,\\
b(X_\eps)\to b(\tilde X),
\end{array}
\qquad \mbox{a.e. in } (0,T)\times\O\times\tilde\Omega,
\end{equation}
where $\mathcal L(\tilde\eta)=\mathcal L(\eta)$ and
\begin{equation}\label{e2.19}
\tilde \eta\in F(\tilde X),\qquad \mbox{a.e. in } (0,T)\times\O\times\tilde\Omega.
\end{equation}
The latter follows as in the proof of Theorem \ref{t2.1} taking
into account that in this case $F$ is given by \eqref{e1.5}, but we omit the details.\\
We set 
\begin{equation}\label{eq:3.20}
M_\eps(t)=X_\eps(t)-x-\int_0^t\Delta X_\eps(s)\,ds+\int_0^t f_\eps(X_\eps(s))\,ds,\quad t\in [0,T]
\end{equation}
and
\begin{equation}
\tilde M_\eps(t)=\tilde X_\eps(t)-x-\int_0^t\Delta \tilde X_\eps(s)\,ds+\int_0^t f_\eps(\tilde X_\eps(s))\,ds,\quad t\in [0,T].
\end{equation}
It turns out that $\tilde M_\eps$ is a square integrable martingale on $(\tilde\Omega,\tilde{\mathcal F},\tilde\P)$
with respect to the filtration $\mathcal F_t=\sigma\{ \tilde X_s; s\le t\}$
because since $\mathcal L(\tilde M_\eps)=\mathcal L(M_\eps)$ and $M_\eps$
is a square integrable martingale on $(\Omega,\mathcal F,\P)$ we have
\begin{equation}
\E\big[ (\tilde X_\eps(t)-\tilde X_\eps(s)-\int_s^t\Delta \tilde X_\eps(\theta)\,d\theta+
\int_s^t f_\eps(\tilde X_\eps(\theta))\,d\theta)\chi(\tilde X_\eps)\big]=0
\end{equation}
for any bounded continuous function $\chi$ and all $0\le s\le t\le T$.\\
Passing to the limit in \eqref{eq:3.20} and taking into account \eqref{e2.17}--\eqref{e2.19} one
obtains that the process
\[
\tilde M(t)=\tilde X(t)-x-\int_0^t \Delta \tilde X(s)\,ds+\int_0^t\tilde \eta(s)\,ds,\quad t\ge0
\]
is an $L^2(\O)$-valued martingale with respect to filtration
$\tilde{\mathcal F}_t=\sigma\{\tilde X(s), s\le t\}$, $t\in [0,T]$,
with finite quadratic variation, see \cite[pag.234]{DZ}. Then by the representation theorem 8.2 in \cite{DZ}
there is a larger probability space $(\tilde{\tilde \Omega},\tilde{\tilde{\mathcal F}},\tilde{\tilde \P})$,
a filtration $\{\tilde{\tilde{\mathcal F}}_t\}_{t\ge 0}$ and an $L^2(\O)$-cylindrical Wiener process
$\tilde{\tilde W}(t)$ on it such that, $\tilde{\tilde\P}$-a.s.,
\[
\tilde M(t)=\int_0^t b(\tilde X(s))\,d\tilde{\tilde W}(t),\qquad t\in[0,T].
\]
This means that the system $(\tilde{\tilde \Omega},\tilde{\tilde{\mathcal F}},
\{\tilde{\tilde{\mathcal F}}_t\}_{t\ge 0},\tilde{\tilde \P},\tilde{\tilde W}(t),\tilde X(t))$
is a martingale solution to \eqref{e1.1}. We have proved therefore
 \begin{theorem}\label{t2.2}
Under Hypotheses {\em (i), (ii)}, for each $x\in H_0^1(\O)$, there is at least one martingale solution
$(\tilde{\tilde\Omega},\tilde{\tilde{\mathcal F}},\{\tilde{\tilde{\mathcal F}}_t\}_{t\ge0},\tilde{\tilde\P},\tilde{X})$
to equation \eqref{e1.1} and $\tilde X$ is given by \eqref{e2.17}. Moreover, we have
\begin{equation}\label{e2.23}
\tilde X\in L^2(\tilde{\tilde\Omega};L^\infty(0,T;H_0^1(\O)))\cap L^2(\tilde{\tilde\Omega};L^2(0,T;H^2(\O))).
\end{equation}
\end{theorem}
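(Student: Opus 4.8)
Most of the argument is already in place in the construction preceding the statement, so the plan is to organise those steps into a proof and then read off the regularity \eqref{e2.23}. First I would record that, for each $\eps>0$, the approximating equation \eqref{e2.4} has a unique strong solution $X_\eps\in L^2(\Omega;C([0,T];L^2(\O)))\cap L^2(\Omega;L^2(0,T;H_0^1(\O)))$ by the classical Lipschitz theory, since $f_\eps\in C^1(\R)$ has linear growth by \eqref{e2.7}. Applying It\^o's formula to $\tfrac12|X_\eps|_2^2$ and, using $x\in H_0^1(\O)$, to $\tfrac12\|X_\eps\|_1^2$, together with the Burkholder--Davis--Gundy inequality, hypotheses (i)--(iii) and Gronwall's lemma, yields the $\eps$-uniform bounds \eqref{e2.9} and \eqref{e2.14}, and then the modulus-of-continuity estimate \eqref{e2.16}. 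These are exactly the estimates the preceding Lemma needs in order to conclude that $\{\nu_\eps=\mathcal L(X_\eps)\}$ is tight in $C([0,T];L^2(\O))$.

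Next I would invoke the Skorohod representation theorem to produce, on a single probability space $(\tilde\Omega,\tilde{\mathcal F},\tilde\P)$, processes $\{\tilde X_\eps\}$ and $\tilde X$ with $\mathcal L(\tilde X_\eps)=\mathcal L(X_\eps)$ and $\tilde X_\eps\to\tilde X$ $\tilde\P$-a.s.\ in $C([0,T];L^2(\O))$, i.e.\ \eqref{e2.17}. Because the laws agree, $f_\eps(\tilde X_\eps)$ stays bounded in $L^2((0,T)\times\O\times\tilde\Omega)$ and so, along a subsequence, converges weakly to some $\tilde\eta$ with $\mathcal L(\tilde\eta)=\mathcal L(\eta)$, while $b(\tilde X_\eps)\to b(\tilde X)$ a.e.\ and in $L^2$ by continuity of $b$. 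The step I expect to be the main obstacle is the Filippov inclusion $\tilde\eta\in F(\tilde X)$ a.e.\ in $(0,T)\times\O\times\tilde\Omega$: weak convergence alone does not suffice, so, exactly as in the proof of Theorem \ref{t2.1}, I would use $f_\eps(\tilde X_\eps)\in\overline{\mathrm{conv}\,f(B_\eps(\tilde X_\eps))}$ together with Mazur's theorem to pass to a convex combination converging strongly (hence, on a further subsequence, a.e.) to $\tilde\eta$, and then combine the a.e.\ convergence $\tilde X_\eps\to\tilde X$ with the definition of the Filippov map of the scalar $f$ to conclude \eqref{e2.19}.

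It then remains to pass to the limit in the identity defining $\tilde M_\eps(t)=\tilde X_\eps(t)-x-\int_0^t\Delta\tilde X_\eps(s)\,ds+\int_0^t f_\eps(\tilde X_\eps(s))\,ds$. Since $\mathcal L(\tilde M_\eps)=\mathcal L(M_\eps)$ and $M_\eps$ is a square-integrable martingale, so is $\tilde M_\eps$ for $\sigma\{\tilde X_\eps(s):s\le t\}$; letting $\eps\to0$ and using \eqref{e2.17}--\eqref{e2.19}, the limit $\tilde M(t)=\tilde X(t)-x-\int_0^t\Delta\tilde X(s)\,ds+\int_0^t\tilde\eta(s)\,ds$ is an $L^2(\O)$-valued martingale with finite quadratic variation for the filtration $\tilde{\mathcal F}_t=\sigma\{\tilde X(s):s\le t\}$. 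The martingale representation theorem (Theorem 8.2 in \cite{DZ}) then furnishes an enlarged space $(\tilde{\tilde\Omega},\tilde{\tilde{\mathcal F}},\tilde{\tilde\P})$ with a filtration $\{\tilde{\tilde{\mathcal F}}_t\}$ and a cylindrical Wiener process $\tilde{\tilde W}$ with $\tilde M(t)=\int_0^t b(\tilde X(s))\,d\tilde{\tilde W}(s)$, which is precisely the mild formulation \eqref{e2.2} with $A=-\Delta$; hence the tuple is a martingale solution to \eqref{e1.1}. Finally, for \eqref{e2.23} I would transfer \eqref{e2.14} to $\tilde X_\eps$ via $\mathcal L(\tilde X_\eps)=\mathcal L(X_\eps)$, so that $\{\tilde X_\eps\}$ is bounded in $L^2(\tilde\Omega;L^\infty(0,T;H_0^1(\O)))\cap L^2(\tilde\Omega;L^2(0,T;H^2(\O)))$; by weak-star, resp.\ weak, compactness the limit along a subsequence must coincide with $\tilde X$, and lower semicontinuity of the norms gives \eqref{e2.23}. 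The extension of $\tilde X$ to $[0,\infty)$ is routine.
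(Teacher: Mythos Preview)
Your proposal is correct and follows essentially the same route as the paper: approximate by \eqref{e2.4}, derive the uniform bounds \eqref{e2.9}, \eqref{e2.14}, \eqref{e2.16}, use the tightness Lemma and Skorohod's theorem to obtain \eqref{e2.17}, identify $\tilde\eta\in F(\tilde X)$ via the Mazur argument of Theorem~\ref{t2.1}, pass to the limit in $\tilde M_\eps$, and invoke the martingale representation Theorem~8.2 of \cite{DZ} on an enlarged space. The regularity \eqref{e2.23} is obtained in the paper exactly as you indicate, by transferring \eqref{e2.14} through the equality of laws and \eqref{e2.17}.
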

\noindent We note that \eqref{e2.23} follow by \eqref{e2.14} and \eqref{e2.17}.
\begin{Remark}\em
Under  additional assumptions on $b$ (for instance if it is independent of $X$)
it turns out that the martingale solution $\tilde X$ is the unique strong solution, see \cite{DFPR}.
(See also Remark 2.1)
\end{Remark}

\section{Sliding mode control of the stochastic heat equation}
For parabolic stochastic equations of the form \eqref{e1.1}
a ``sliding'' mode dynamic arises for discontinuous (``jump'') functions $f\colon\R\to\R$ of the form
\eqref{e2.11n}, that is
\begin{equation}\label{e3.1}
f(r)=\begin{cases}f_1(r)& \text{for } g(r)>0\\f_2(r))& \text{for } g(r)<0\end{cases},
\qquad r\in\R
\end{equation}
where $g$, $f_1$, $f_2$ are given continuous functions.

As in the previous finite dimensional case, the objective of the ``sliding-mode" control is to design for the linear time invariant system
\begin{equation}\label{e44.2}
\begin{array}{l}
dX-\Delta\,X\,dt=du\\
X=0\\X(0)=x
\end{array}\quad
\begin{array}{l}
\mbox{in } (0,T)\times\O\\
\mbox{on } (0,T)\times\bO\\
\mbox{in }\O
\end{array}
\end{equation}
a stochastic feedback controller of the form
\begin{equation}\label{e44.3}
du=-f(X)\,dt +b(X)\,dW
\end{equation}
such that the ``sliding" motion occurs on the manifold $\Sigma=\{X : g(X)=0\}$
which is also referred as ``sliding" or  ``switching" surface.
Roughly speaking, this means that any trajectory of the closed loop system \eqref{e44.2}-\eqref{e44.3},
which starts from initial state $x$, reaches the sliding surface $\Sigma$ at a certain time $t_0$ and
remains there for $t\ge t_0$. As a matter of fact, this last phase of the dynamics is called ``sliding mode".

Of course in virtue of Theorem \ref{t2.2} a weak solution $X$ to \eqref{e44.2} in the
sense of Definition \ref{d3.1} exists for the extended multivalued closed loop system
\begin{equation}\label{e44.4}
\begin{array}{l}
\begin{aligned}dX-\Delta\,X\,dt+f_1(X)\tilde H(g(X))\,dt\hspace{2cm}\\+f_2(X)\tilde H(-g(X))\,dt=\sigma(X)\,dW\end{aligned}\\
X=0\\X(0)=x
\end{array}\quad
\begin{array}{l}
\begin{aligned}\\\mbox{in } (0,T)\times\O\end{aligned}\\
\mbox{on } (0,T)\times\bO\\
\mbox{in }\O.
\end{array}
\end{equation}
Here $\tilde H$ is the multivalued Heaviside function \eqref{eq:2.22} on $\R$.
To begin with we shall prove first an invariance result for the manifold $\Sigma=\{X:g(X)=0\}$.
\begin{theorem}\label{t3.1}
Let $g\in C^2
(\R)$, $f_1$, $f_2$ be continuous functions which satisfy assumption {\em(i)}
and let $b$, satisfying {\em (iii)}, be such that
\begin{align}
\label{e44.5}&b^2(r)(g\,g''+(g')^2)(r)\le C\,g^2(r),&\forall r\in\R\hspace{.9cm}\\
&g(r)\,g''(r)+(g'(r))^2\ge0&\forall r\in\R\hspace{.9cm}\\
\label{e3.7}&f_1(r)\,g'(r)\ge0&\mbox{for }g(r)>0\\
\label{e3.8}&f_2(r)\,g'(r)\le0&\mbox{for }g(r)<0.
\end{align}
for some $C>0$. 
Then, for all $x\in H_0^1(\O)$ such that $g(x)=0$ on $\O$, there is a martingale solution $(\tilde{\tilde\Omega},\tilde{\tilde{\mathcal F}},\{\tilde{\tilde{\mathcal F}}_t\}_{t\ge0},\tilde{\tilde\P},\tilde{X})$ to system \eqref{e44.4} such that
\begin{equation}
g(\tilde X(t))=0,\qquad \forall t\in[0,T], \;\tilde{\tilde\P}\mbox{-a.s.}.
\end{equation}
\end{theorem}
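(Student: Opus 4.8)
The strategy parallels the finite dimensional argument in Theorem~\ref{t4.2}, but now carried out at the level of the approximating parabolic equation \eqref{e2.4}, with $f$ replaced by the mollification $f_\eps$ of the jump function \eqref{e3.1}. I start from the strong solution $X_\eps\in L^2(\Omega;C([0,T];L^2(\O)))\cap L^2(\Omega;L^2(0,T;H_0^1(\O)))$ of \eqref{e2.4} and, since $x\in H_0^1(\O)$ with $g(x)=0$, I also have the higher regularity estimate \eqref{e2.14}. The key computation is to apply It\^o's formula to the functional $u\mapsto \tfrac12\int_\O \varphi_\lambda(g(u(\xi)))^2\,d\xi$ (or, more simply, to $u\mapsto\int_\O\psi_\lambda(g(u(\xi)))\,d\xi$ for a suitable smooth approximation $\psi_\lambda$ of $r\mapsto r^2$), tracking the sign and boundedness properties in \eqref{e4.17}. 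Writing $G_\eps(t)=g(X_\eps(t))$, the chain rule gives
\[
d\int_\O G_\eps^2\,d\xi + 2\int_\O g'(X_\eps)G_\eps\,|\nabla X_\eps|^2\,g''(X_\eps)\text{-terms}\,dt
= (\text{drift from }f_\eps) + (\text{It\^o correction}) + (\text{martingale}),
\]
where the Laplacian contributes, after integration by parts, a term controlled using the hypothesis $gg''+(g')^2\ge0$, which makes the second-order-in-space contribution have the right sign.

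The next step is to handle the two nonlinear drift terms. Using \eqref{e2.5} and the structure \eqref{e3.1}, I split $f_\eps(X_\eps)\,g'(X_\eps)\,G_\eps$ into an integral over $\{g(X_\eps-\eps\theta)>0\}$ weighted by $f_1$ and one over $\{g(X_\eps-\eps\theta)<0\}$ weighted by $f_2$; assumptions \eqref{e3.7}--\eqref{e3.8} give, after sending $\eps\to0$ at the end, that this contribution has a favorable sign (up to an $o(1)$ error of size $\delta(\eps)(1+|X_\eps|)$, exactly as in the proof of Theorem~\ref{t4.2}). The It\^o correction term involves $b^2(X_\eps)\,(g\,g''+(g')^2)(X_\eps)$, which by \eqref{e44.5} is bounded by $C\,g^2(X_\eps)=C\,G_\eps^2$; this is precisely the condition tailored so that Gronwall's lemma applies. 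Taking expectations kills the martingale term, and one obtains
\[
\E\int_\O G_\eps(t)^2\,d\xi \le \E\int_\O G_\eps(0)^2\,d\xi + C\int_0^t \E\int_\O G_\eps(s)^2\,d\xi\,ds + o(1),
\]
so that $\E\,|g(X_\eps(t))|_2^2\le C_T(\,|g(x)|_2^2 + o(1)\,)=o(1)$ since $g(x)=0$.

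Finally I transfer this to the martingale solution. By the tightness lemma and Skorohod's theorem (exactly as in the construction preceding Theorem~\ref{t2.2}) I get $\tilde X_\eps\to\tilde X$ in $C([0,T];L^2(\O))$, $\tilde\P$-a.s., with $\mathcal L(\tilde X_\eps)=\mathcal L(X_\eps)$; hence $\E\,|g(\tilde X_\eps(t))|_2^2=\E\,|g(X_\eps(t))|_2^2\to0$, and by the a.s.\ convergence together with the local Lipschitz/continuity of $g$ and uniform $L^2$-in-$H_0^1$ bounds one passes to the limit to conclude $\E\,|g(\tilde X(t))|_2^2=0$, i.e.\ $g(\tilde X(t))=0$ for a.e.\ $t$, $\tilde{\tilde\P}$-a.s.; continuity of $t\mapsto\tilde X(t)$ in $L^2(\O)$ upgrades this to all $t\in[0,T]$. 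That $\tilde X$ is a martingale solution to \eqref{e44.4} is Theorem~\ref{t2.2}.

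\textbf{Main obstacle.} The delicate point is the rigorous It\^o computation for the composite functional $u\mapsto\int_\O\psi(g(u))\,d\xi$ applied to the $H^2$-valued process $X_\eps$: one must justify differentiating through the integral, control the second-order term $\int_\O \psi''(g(X_\eps))\,(g'(X_\eps))^2\,|\,(-\Delta X_\eps)\,|$-type cross terms arising from the interaction of the Laplacian drift with the nonlinear test function, and verify that the stochastic integral is a genuine martingale (so its expectation vanishes). This is where the regularity \eqref{e2.14}, the boundedness of $g',g''$ on the range of $X_\eps$ (a consequence of $g\in C^2$ together with the a priori $L^\infty$-in-time $H_0^1\hookrightarrow$ bounds in low dimension, or an additional localization otherwise), and the sign condition $gg''+(g')^2\ge0$ are all used in an essential way; the remaining steps are routine given the templates in Theorems~\ref{t2.1}, \ref{t4.2} and \ref{t2.2}.
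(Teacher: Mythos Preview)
Your proposal is correct and follows essentially the same route as the paper: apply It\^o's formula to $u\mapsto\int_\O g^2(u(\xi))\,d\xi$ along $X_\eps$, use $gg''+(g')^2\ge0$ to give the Laplacian term the good sign after integration by parts, split the $f_\eps$ term via the mollifier and \eqref{e3.7}--\eqref{e3.8}, bound the It\^o correction by $C\,g^2(X_\eps)$ via \eqref{e44.5}, apply Gronwall, and transfer the resulting estimate $\E\,|g(X_\eps(t))|_2^2\le\delta(\eps)e^{Ct}$ to $\tilde X_\eps$ by equality of laws before letting $\eps\to0$. The only superfluous step in your write-up is the introduction of $\varphi_\lambda$ or $\psi_\lambda$: since $g\in C^2(\R)$, the map $r\mapsto g(r)^2$ is already $C^2$ and It\^o's formula applies directly, which is what the paper does; the $\varphi_\lambda$ device is needed only in Theorems~\ref{t4.2} and~\ref{t5.1} where one works with $|g|$ rather than $g^2$.
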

\begin{proof} We start with the approximating equation \eqref{e2.4}.
We apply the It\^o formula to function $x\to g^2(x)$ and get
\[
\begin{split}
d\int_{\O} g^2(X_\eps(t,\xi))\,d\xi+\,2\int_{\O}(g\,g''+(g')^2)(X_\eps(t,\xi))|\nabla X_\eps(t,\xi)|^2\,d\xi\;dt\\
+\,2\int_{\O}f_\eps(X_\eps(t,\xi))g(X_\eps(t,\xi))g'(X_\eps(t,\xi))\,d\xi\;dt=\\
\sum_{j=1}^\infty \mu_j^2\int_{\O}|b(X_\eps(t,\xi))\,e_j|^2|(g\,g''+(g')^2)(X_\eps(t,\xi))|\,d\xi\;dt\\
+\sum_{j=1}^\infty \mu_j\int_{\O}b(X_\eps(t,\xi))g(X_\eps(t,\xi))g'(X_\eps(t,\xi))\,e_j\,d\xi\;d\beta_j(t)
\end{split}
\]
Taking into account \eqref{e2.5},\eqref{e2.7}, we obtain that
\[
\begin{split}
\int_\O f_\eps(&X_\eps)g(X_\eps)g'(X_\eps)\,d\xi=\int_{}^{}\rho(\theta)\\
&\Big(\int\limits_{[g(X_\eps-\eps\theta)>0]}f_1(X_\eps-\eps\theta)(g g')(X_\eps-\eps\theta)\,d\xi+\\
&\hspace{0cm}\int\limits_{[g(X_\eps-\eps\theta)<0]}f_2(X_\eps-\eps\theta)(g g')(X_\eps-\eps\theta)\,d\xi\Big)\,d\theta+
\zeta_\eps(t),\qquad \forall t\in [0,T],
\end{split}
\]
where
\[
\zeta_\eps(t)\le \tilde C\,\eps \int_\O (|X_\eps(t,\xi)|+1)\,d\xi
\]
with $\tilde C= C_1\,C$, $C_1>0$; thus, by  \eqref{e44.5}--\eqref{e3.7},  this yields
\[
\begin{split}
\E\int_{\O} g^2(X_\eps(t,\xi))\,d\xi\le C\,\E\int_0^t\int_{\O} g^2(X_\eps(t,\xi))\,d\xi\,ds\hspace{3cm}\\
+\;\delta(\eps)\,\E \int_0^t\int_{\O} (X_\eps(t,\xi)^2+1)\,d\xi\,ds,\quad \forall t\in [0,T].
\end{split}
\]
where $\lim_{\eps\to0}\delta(\eps)=0$ and the constant $C$ is independent of $\eps$.\\
This yields via Gronwall's lemma,
\begin{equation}\label{e3.10}
\E\int_\O g^2(X_\eps(t,\xi))\,d\xi\le \delta(\eps)\;\exp(C\,t),\qquad \forall t\in [0,T].
\end{equation}
If $\tilde X_\eps$ is defined as in the proof of Theorem \ref{e2.1}, that is
$\mathcal L(\tilde X_\eps)=\mathcal L(X_\eps)$ and \eqref{e2.17} holds, we get by \eqref{e3.10} that
\[
\tilde \E\int_\O g^2(\tilde X_\eps(t,\xi))\,d\xi\le \delta(\eps)\,\exp(C\,t),\qquad \forall t\in [0,T],\;\forall \eps>0
\]
where $\tilde\E$ is the expectation in probability space $(\tilde\Omega,\tilde{\mathcal F},\tilde\P)$.
Hence, letting $\eps$ tend to zero we get   $g^2(\tilde X)=0$, $dt\times d\xi\times\tilde{\tilde\P}$-a.e.
in $(0,T)\times\O\times \tilde{\tilde\Omega}$ as claimed.
\end{proof}
\begin{Remark}\em
In the particular case where the function
\[
F(r)\equiv f_1(r)H(g(r))+f_2(r)H(-g(r)),\quad r\in\R
\]
is a maximal monotone graph in $\R\times\R$ with $R(F)=D(F)=\R$, equation \eqref{e44.4} has
a unique strong solution $X$. This happens for instance if $f_i$, $i=1,2$, are
monotonically nondecreasing continuous functions such that $f_1\ge f_2$ on $\R$
and $g(x)=x$ (see \cite{B1}). Then the corresponding system \eqref{e44.4}
\begin{equation}\label{e4.11}
\begin{array}{l}
\begin{aligned}&dX-\Delta X\,dt+(f_1(X)H(X)+f_2(X)H(-X))\,dt=\\
&\hspace{6.2cm}b(X)\,dW, \quad \mbox{in } (0,T)\times\O\\
&X=0,\hspace{6.28cm}\qquad\mbox{on }(0,T)\times\bO,\end{aligned}
\end{array}\end{equation}
has the invariant manifold $X=0$.
\end{Remark}
Under stronger assumptions on $g$ and $b$ it turns out that the closed loop
system \eqref{e44.2}--\eqref{e44.3} (equivalently \eqref{e4.11}) has
a ``sliding'' mode dynamics with the switching manifold
$\Sigma=\{X:g(X)=0\}$.
Namely, we assume that\\
\begin{Hypothesis}\label{(k)}
$f_i$, $i=1,2$  satisfy assumption (i) and $g\in C^2_b(\R)$, $b\in C^2(\R)\cap Lip(\R)$
are such that
\begin{align}
\label{e44.12}&|g''(r)|\;|b(r)|^2\le C^\ast\,|g(r)|,\qquad \forall r\in \R,\\
\label{e44.13}
&f_1(r)\,g'(r)\ge \alpha\quad \mbox{if } g(r)>0; f_2(r)\,g'(r)\le -\alpha\quad \mbox{if } g(r)<0\\
\label{e44.14}&g', g'' \in L^\infty(\R),\quad g''\,\mbox{sgn}(g)\ge0\quad \mbox{on }\R,
\end{align}
where $\alpha>0$.
\end{Hypothesis}

We note that by Theorem \ref{t2.2}, equation \eqref{e4.11} has a martingale solution $\tilde X$ given by \eqref{e2.17}.
\begin{theorem}\label{t5.1}
Under Hypothesis \ref{(k)}, for each $x\in H_0^1(\O)$ there is a martingale solution
$(\tilde{\tilde \Omega},\tilde{\tilde {\mathcal F}},\tilde{\tilde \P},\tilde{\tilde W},\tilde X)$ to \eqref{e44.2}, \eqref{e44.3}
such that for $\tau=\inf\{t : |g(\tilde X(t))|=0\}$ we have
\begin{equation}\label{e44.15}
\tilde{\tilde\P}(\tau>t)\le \frac{\tilde C}{\alpha}(1-e^{-\tilde C\,t})^{-1}\,|g(x)|^2.
\end{equation}
where $\tilde C=C_1C^\ast$. Moreover, if $g(x)=0$ a.e. in $\O$, then $g(\tilde X(t))=0$ for all $t\ge0$.
\end{theorem}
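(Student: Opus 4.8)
The plan is to mimic the proof of Theorem \ref{t4.2}, replacing the finite-dimensional Itô formula for $|g(X_\eps)|$ by an Itô formula for the functional $x\mapsto\varphi_\lambda\big(\int_\O g^2(x(\xi))\,d\xi\big)$, or more directly for $x\mapsto \int_\O |g(x(\xi))|\,d\xi$ via a smooth approximation, applied to the strong solution $X_\eps$ of the approximating equation \eqref{e2.4}. The quantity playing the role of $|g(\tilde X(t))|$ will be $G_\eps(t)=\int_\O |g(X_\eps(t,\xi))|\,d\xi$ (or its square, matching the $|g(x)|^2$ appearing in \eqref{e44.15}); I would actually work with $G^2_\eps$ since Theorem \ref{t3.1} already carries out the Itô computation for $\int_\O g^2(X_\eps)\,d\xi$ and gives the gradient term $2\int_\O (gg''+(g')^2)(X_\eps)|\nabla X_\eps|^2\,d\xi$, which is $\ge 0$ by \eqref{e44.14}, so it can be dropped.

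First I would write the Itô formula for $t\mapsto\int_\O g^2(X_\eps(t,\xi))\,d\xi$ exactly as in the proof of Theorem \ref{t3.1}, keeping the drift term $2\int_\O f_\eps(X_\eps)(gg')(X_\eps)\,d\xi\,dt$ on the left. Using the mollification identity \eqref{e2.5} together with the splitting \eqref{e3.1} and the sign conditions \eqref{e44.13}, I expect
\[
\int_\O f_\eps(X_\eps)\,g(X_\eps)\,g'(X_\eps)\,d\xi \ge \alpha\int_\O |g(X_\eps)|\,d\xi - \delta(\eps)\int_\O(|X_\eps|+1)\,d\xi,
\]
with $\delta(\eps)\to0$; here one must be a little careful because \eqref{e44.13} controls $f_i g'$ rather than $f_i (gg')$, but $gg'\,\mathrm{sgn}(g)=|g|\,g'\,\mathrm{sgn}(g)\cdot\mathrm{sgn}(g)$... actually $gg' = |g|\,g'\,\mathrm{sgn}(g)$, so on $\{g>0\}$ one has $f_1 gg' = f_1 g' \,g \ge \alpha g = \alpha|g|$, and symmetrically on $\{g<0\}$, which is exactly what is needed. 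On the right, \eqref{e44.12} gives $\sum_j\mu_j^2\int_\O |b(X_\eps)e_j|^2|gg''+(g')^2|(X_\eps)\,d\xi \le C^\ast\int_\O|g(X_\eps)|\,d\xi + (\text{bounded-by-}(g')^2\text{ term})$; the $(g')^2$ contribution is controlled by $\int_\O g^2(X_\eps)\,d\xi$ using that $g\in C^2_b$ so $g^2$ and $|g|$ are comparable up to the behaviour near zero — one may instead keep everything in terms of $\int_\O g^2$ and absorb. Taking expectations kills the martingale term, and after using estimate \eqref{e2.9} to bound $\E\int_\O(|X_\eps|+1)\,d\xi$ uniformly in $\eps$, I obtain
\[
\E\,G^2_\eps(t) + 2\alpha\int_0^t \E\!\!\int_{\{|g(X_\eps)|>0\}}\!\!|g(X_\eps)|\,d\xi\,ds \le \tilde C\int_0^t \E\,G^2_\eps(s)\,ds + \delta(\eps),
\]
where $G^2_\eps(t)=\int_\O g^2(X_\eps(t,\xi))\,d\xi$, $\tilde C=C_1 C^\ast$. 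Passing to the tilde variables (so that $\mathcal L(\tilde X_\eps)=\mathcal L(X_\eps)$ and \eqref{e2.17} holds), letting $\eps\to0$, and using the convergence \eqref{e2.17} and weak convergence of $f_\eps(\tilde X_\eps)$, I get the analogue of \eqref{e4.22}: setting $Z(t)=e^{-\tilde C t}\int_\O g^2(\tilde X(t,\xi))\,d\xi$,
\[
\E Z(t) + \alpha\int_0^t e^{-\tilde C s}\,\tilde{\tilde\P}(\tau>s)\,ds \le |g(x)|^2 + 0,
\]
where $|g(x)|^2$ abbreviates $\int_\O g^2(x(\xi))\,d\xi$ and where I used $\{|g(\tilde X(s))|>0\}=\{\tau>s\}$ up to a null set from the supermartingale/first-hitting argument (exactly as in Theorem \ref{t4.2}, $Z$ is a nonnegative supermartingale, hence stays at $0$ once it hits $0$). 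This gives $\tilde{\tilde\P}(\tau>t)\le \frac{\tilde C}{\alpha}(1-e^{-\tilde C t})^{-1}|g(x)|^2$, which is \eqref{e44.15}; and when $g(x)=0$ on $\O$ the right-hand side above is $0$, forcing $Z\equiv0$, i.e. $g(\tilde X(t))=0$ for all $t$ — this last part is essentially Theorem \ref{t3.1} again.

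The main obstacle I anticipate is the rigorous justification of the Itô formula in infinite dimensions for the (non-$C^2$, after the $|\cdot|$ or $\varphi_\lambda$ composition) functional and, relatedly, the passage to the limit $\eps\to0$ in the drift and diffusion terms. On a technical level one should first apply Itô to $\varphi_\lambda(g^2)$ or better keep $g^2$ smooth (since $g\in C^2_b$, $\xi\mapsto g^2(X_\eps(\xi))$ is genuinely in the right space and the Itô formula of \cite{DZ} applies, as already used in Theorem \ref{t3.1}), and only at the very end take $\lambda\to0$ if one wants $|g|$; working with $g^2$ throughout avoids the $\varphi_\lambda$ machinery entirely and matches the $|g(x)|^2$ in the statement, so I would follow that route. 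The second delicate point is identifying $\int_{\{|g(\tilde X)|>0\}}$ with $\tilde{\tilde\P}(\tau>\cdot)$ after the limit: this requires that the weak/a.e. limit $\tilde X$ still satisfies the pointwise inequality and that $\{(s,\omega): g(\tilde X(s,\omega))\ne0\}$ and $\{\tau>s\}$ agree up to $ds\otimes d\tilde{\tilde\P}$-null sets, which follows from the supermartingale property of $Z$ exactly as in the finite-dimensional proof. Everything else is a routine transcription of the arguments in the proofs of Theorem \ref{t4.2} and Theorem \ref{t3.1}.
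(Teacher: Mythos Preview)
Your decision to work with $\int_\O g^2(X_\eps)\,d\xi$ in place of $\int_\O|g(X_\eps)|\,d\xi$ (via the $\varphi_\lambda$-approximation) is precisely where the argument breaks, and two separate things go wrong. First, the drift you extract from the $f_\eps$-term is $2\alpha\int_\O|g(X_\eps)|\,d\xi$, not $\alpha\,\mathds 1_{[|g(X_\eps)|>0]}$. Your passage from
\[
2\alpha\int_0^t \E\!\int_\O|g(\tilde X(s))|\,d\xi\,ds
\qquad\text{to}\qquad
\alpha\int_0^t\tilde{\tilde\P}(\tau>s)\,ds
\]
is unjustified: there is no lower bound $\int_\O|g|\,d\xi\ge c\,\mathds 1_{[g\not\equiv0]}$, since the $L^1$-norm can be arbitrarily small while still positive. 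The whole point of the $\varphi_\lambda$-trick is that $\varphi_\lambda'(g)\,g'\,f_\eps\to \text{sgn}(g)\,g'\,f_\eps\ge\alpha$ \emph{pointwise} on $\{g\ne0\}$, producing a drift that does not degenerate as $g\to0$; a quadratic Lyapunov functional produces a drift proportional to $|g|$, which is too weak for a reaching-time bound of the form \eqref{e44.15}.

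Second, the It\^o correction for $g^2$ carries the term $(g')^2(X_\eps)\,b^2(X_\eps)$, and condition \eqref{e44.12} says nothing about $(g')^2b^2$: near a simple zero of $g$ one has $g'\ne0$ while $g=0$, so $(g')^2$ is \emph{not} dominated by $|g|$ or by $g^2$. This term therefore cannot be ``absorbed'' into a Gronwall factor $\tilde C\int_\O g^2\,d\xi$ as you suggest; it contributes an additive, $\eps$-independent constant on the right-hand side, which destroys both the supermartingale property of $Z$ and the bound by $|g(x)|^2$. The paper's route via $\varphi_\lambda(g)$ avoids this because $\varphi_\lambda''(g)=0$ outside $\{|g|\le 2\lambda\}$, so the offending $(g')^2\varphi_\lambda''(g)$-contribution is concentrated near $\{g=0\}$ and disappears as $\lambda\to0$; what survives is $\varphi_\lambda'(g)\,g''\,b^2$, which \eqref{e44.12} does control by $C^\ast|g|$. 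In short, the $\varphi_\lambda$ machinery is not a technicality you can sidestep --- it is what simultaneously manufactures the non-degenerate drift and kills the uncontrolled part of the second-order correction. The proof must follow that route, exactly as in Theorem~\ref{t4.2}, with the additional use of \eqref{e44.14} to handle the Laplacian contribution (this is \eqref{e44.16}); the $g^2$-computation from Theorem~\ref{t3.1} is only adequate for the invariance statement, not for the hitting-time estimate.
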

\begin{proof} The proof is very similar to that of Theorem \ref{t4.2}, so it will be sketched only. If
$X_\eps$ is the solution to equation \eqref{e2.4} and $\tilde X_\eps$ such that $\mathcal L(X_\eps)=\mathcal L(\tilde X_\eps )$,
we get via It\^o's formula applied to function
$x\to \varphi_\lambda(g(x))$ and after letting $\lambda\to0$
\[
\begin{split}
d\,|g(\tilde X_\eps(t))|_2+\alpha \mathds 1_{[|g(\tilde X_\eps(t))|>0]}dt\le
\langle b(\tilde X_\eps(t))dW(t),g'(\tilde X_\eps(t))\,\mbox{sgn}(g(\tilde X_\eps(t))\rangle_2\\
+\tfrac12\sum_{k=1}^\infty \mu_k^2 \int_O |b(X_\eps)\,e_k|^2|g''(X_\eps)|\,d\xi
\end{split}
\]
where $\varphi_\lambda$ is the function introduced in the proof of Theorem \ref{t4.2} (see \eqref{e4.17}).
We have used here condition \eqref{e44.14} which in virtue of \eqref{e4.17} yields
\begin{equation}\label{e44.16}
\begin{split}
\lim_{\lambda\to0}
\int_{\O}\Delta\tilde X_\eps(t,\xi))\cdot\varphi_\lambda'(g(\tilde X_\eps(t,\xi)))g'(\tilde X_\eps(t,\xi))\,d\xi\\
=-\lim_{\lambda\to0}\int_{\O}\nabla\tilde X_\eps(t,\xi))\cdot\nabla
\big(\varphi_\lambda'(g(\tilde X_\eps(t,\xi)))&g'(\tilde X_\eps(t,\xi))\big)\,d\xi\le0,
\end{split}
\end{equation}
and also assumption \eqref{e44.13} which, as we have seen in the proof of Theorem \ref{t4.2},
implies that
\[
f_\eps(\tilde X_\eps)\,g'(\tilde X_\eps)\,\text{sgn}\,g(\tilde X_\eps)\ge \alpha-\delta(\eps)(1+|\tilde X_\eps|).
\]
Now using \eqref{e44.12} and  letting $\eps\to0$ we get
\[
\begin{split}
|g(\tilde X(t))|_2+\alpha \int_s^t\mathds 1_{[|g(\tilde X(\theta))|>0]}d\theta\le|g(\tilde X(s))|_2+
\tilde C\int_s^t |g(\tilde X(\theta))|_2\,d\theta+\\\int_s^t\langle b(\tilde X(\theta))dW(\theta),g'(\tilde X(\theta))\,\mbox{sgn}(g(\tilde X(\theta))\rangle_2
\qquad \mbox{for }0\le s\le t
\end{split}\]
for some constant $\tilde C>0$.\\
This yields (see \eqref{e4.22})
\[
\begin{split}
e^{-\tilde C\,t}|g(\tilde X(t))|_2+\alpha\int_s^t e^{-\tilde C\,\theta}\mathds1_{[|g(\tilde X(\theta))|>0]}\,d\theta\le
e^{-\tilde C\,s}|g(\tilde X(s))|_2+\hspace{1cm}\\
\int_s^t e^{-\tilde C\,\theta}\mathds1_{[|g(\tilde X(\theta))|>0]}
\,\langle g'(\tilde X(\theta))\mbox{sgn}(g(\tilde X(\theta))),\sigma(\tilde X(\theta))\,dW(\theta)\rangle_2,\\
\quad 0\le s\le t<\infty.
\end{split}
\]
Here $\tilde Z=|g(\tilde X(t))|_2\,e^{-\tilde C\,t}$ is a nonnegative supermartingale and so $\tilde Z(t)=\tilde Z(\tau)$ $\tilde{\tilde\P}$-a.s. for
$t>\tau=\inf\{t>0:|\tilde Z(t)|=0\}$.
Taking expectation, we get
\[
\E\tilde Z(t)+\alpha\int_0^t e^{-\tilde C\,s}\tilde{\tilde\P}[\tau>s]\,ds\le |g(x)|_2+\tilde C\,\int_0^t\E\tilde Z(s)\,ds
\]
which implies the desired estimate \eqref{e44.15}.
\end{proof}

By \eqref{e44.14} we see that $g$ is convex on $[g>0]$, concave on $[g<0]$ and
that $[r : g(r)=0]=[\alpha_1,\alpha_2]$ is a closed interval.\\
Hence the switching manifold $\Sigma=\{X:g(X)=0\}$ is of the form
\[
\Sigma=\{X\in L^2(\O):\alpha_1\le X\le \alpha_2\}
\]
and so under the assumptions \eqref{e44.12}--\eqref{e44.14} the closed loop
system \eqref{e44.2}--\eqref{e44.3} has for each $x\in H^1_0(\O)$
a martingale solution which 
reaches the set $\Sigma$
(that is the interval $[\alpha_1,\alpha_2]$) by time $t$ with probability
estimated by \eqref{e44.15} 
and remains in this interval after that time.
\section{``Sliding'' mode control of  a stochastic pa\-rabolic systems}
Consider here the parabolic system
\begin{equation}\label{e5.1n}
\begin{array}{ l}
dX-\Delta X\,dt+f_1(X,Y)\,dt=b_1(X,Y)\,dW_1,\quad\!\text{in } (0,T)\times\O\\
dY-\Delta Y\,dt+f_2(X,Y)\,dt=b_2(X,Y)\,dW_2,\quad\text{in } (0,T)\times\O\\
X(0)=x(\xi),\quad Y(0)=y(\xi),\qquad\qquad\qquad\quad\ \xi\in\O\\
X=Y=0,\qquad\qquad\qquad\qquad\qquad\qquad\qquad\ \ \text{on } (0,T)\times\bO
\end{array}
\end{equation}
where $f_i\in C(\R^2)$ satisfy assumption (i), $b_i\in C^2(\R)\cap \text{Lip}(\R)$, $i=1,2$
and $W_1$, $W_2$ are Wiener processes of the form \eqref{e1.2} in the space $H=L^2(\O)\times L^2(\O)$.
Let $g\colon \R^2\to \R$, $g\in C^2(\R^2)$ be given.

Arguing as in the proof of Theorem \ref{t2.2} it follows that for each
$(x,y)\in H^1_0(\O)\times H^1_0(\O)$, system \eqref{e5.1n} has a martingale solution $(\tilde X, \tilde Y)$
obtained as limit of solutions $({\tilde X}_\eps,{\tilde Y}_\eps)$ to corresponding
approximating system
\begin{equation}\label{e5.1b}
\begin{array}{ l}
d \Big(\!\!\begin{array}{c}X \\Y\end{array}\!\!\Big)+
\Big(\!\!\begin{array}{cc}-\Delta&0\\0&-\Delta\end{array}\!\!\Big)\Big(\!\!\begin{array}{c}X \\Y\end{array}\!\!\Big)\,dt
+F_\eps\Big(\!\!\begin{array}{c}X \\Y\end{array}\!\!\Big)\;dt =
\Big(\!\!\begin{array}{c}b_1(X,Y) \,dW_1\\b_2(X,Y)\,dW_2\end{array}\!\!\Big)  \\
\Big(\!\!\begin{array}{c}X \\Y\end{array}\!\!\Big)(0)= \Big(\!\!\begin{array}{c}x \\y\end{array}\!\!\Big) 
\end{array}
\end{equation}
where $F\colon\R^2\to\R^2$ is given by
\[
F=\begin{cases}
f_1  & \text{in  }[g>0], \\
f_2   & \text{in }[g<0].
\end{cases}
\]
and
\[
F_\eps(r_1,r_2)=\int_{\R^2}\rho(r-\eps\theta)\,F(\theta)\,d\theta,\quad (r_1,r_2)\in\R^2,\;\;\eps>0.
\]
Assume further that
\begin{equation}\label{e5.2}
|D^2g(r)|\big(|b_1(r)|^2+|b_2(r)|^2\big)\le C\,|g(r)|,\quad\forall r\in\R^2
\end{equation}
\begin{equation}\label{ e5.3}
f_1(r)\,g_{r_1}(r)\ge\alpha\quad\text{in }[r:g(r)>0]
\end{equation}
\begin{equation}\label{ e5.4}
f_2(r)\,g_{r_2}(r)\le -\alpha\quad\text{in }[r:g(r)<0]
\end{equation}
where $\alpha>0$ and $(g_{r_1},g_{r_2})=\nabla g$,
\begin{equation}\label{e5.5}
g_{r_1r_1}\;\text{sgn}\,g\ge 0,\quad (g_{r_1r_2}^2-g_{r_1r_1}g_{r_2r_2})\;\text{sgn}\,g\le 0.
\end{equation}
We have
\begin{theorem}
Under assumptions \eqref{e5.2}--\eqref{e5.5} for each $(x,y)\in H_0^1(\O)\times H_0^1(\O)$ there is
a martingale solution $\{\tilde{\tilde\Omega},\tilde{\tilde{\mathcal F}},\tilde{\tilde\P},\tilde{\tilde W},(\tilde X,\tilde Y)\}$ to \eqref{e5.1n} such that if $\tau$ is the stopping time
 $\tau=\inf\{t:g(\tilde X(t),\tilde Y(t))=0\}$ then
\begin{equation}
\tilde{\tilde\P}[\tau>t]\le \frac{C}{\alpha}(1-e^{-C\,t})^{-1}|g(x,y)|_{(L^2(\O))^2}
\end{equation}
for some constant $C>0$.
\end{theorem}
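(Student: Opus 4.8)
The plan is to follow closely the proofs of Theorems \ref{t4.2} and \ref{t5.1}: we work on the approximating system \eqref{e5.1b} and transport the resulting pathwise estimates to the martingale solution $(\tilde X,\tilde Y)$ which, as recalled just before the statement, is obtained by the argument of Theorem \ref{t2.2} as the $\eps\to0$ limit of processes $(\tilde X_\eps,\tilde Y_\eps)$ with $\mathcal L(\tilde X_\eps,\tilde Y_\eps)=\mathcal L(X_\eps,Y_\eps)$, $(X_\eps,Y_\eps)$ solving \eqref{e5.1b}, and $\tilde X_\eps\to\tilde X$, $\tilde Y_\eps\to\tilde Y$ a.s.\ in $C([0,T];(L^2(\O))^2)$. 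Applying It\^o's formula to $\tfrac12(|X_\eps|_2^2+|Y_\eps|_2^2)$ and to $\tfrac12(\|X_\eps\|_1^2+\|Y_\eps\|_1^2)$, and using (i) with $b_i\in\mathrm{Lip}(\R)$ and $(x,y)\in H_0^1(\O)\times H_0^1(\O)$, gives exactly as in \eqref{e2.9}, \eqref{e2.14} the uniform-in-$\eps$ bounds $\tilde\E\sup_{[0,T]}(|\tilde X_\eps|_2^2+|\tilde Y_\eps|_2^2)\le C$ (and the analogous $H_0^1$ bound), which are also what makes the laws tight in $C([0,T];(L^2(\O))^2)$. It therefore only remains to produce the stopping-time estimate for $(\tilde X,\tilde Y)$.

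First I would apply It\^o's formula to $t\mapsto\int_\O\varphi_\lambda(g(X_\eps(t,\xi),Y_\eps(t,\xi)))\,d\xi$, with $\varphi_\lambda\in C^2(\R)$ the smoothed absolute value of \eqref{e4.17}; since $W_1$ and $W_2$ are independent there is no cross-variation term. Writing $g_\eps:=g(X_\eps,Y_\eps)$, three drift contributions appear. The nonlinear (first order) term $-\int_\O\varphi_\lambda'(g_\eps)\,(\nabla g\cdot F_\eps)(X_\eps,Y_\eps)\,d\xi$ is treated just as in Theorem \ref{t4.2}: unfolding the mollification $F_\eps$ and invoking the coercivity assumed for $f_1$ on $\{g>0\}$ and for $f_2$ on $\{g<0\}$, one gets the lower bound $\alpha\,\mathds1_{[|g_\eps|>0]}-\delta(\eps)\int_\O(1+|X_\eps|+|Y_\eps|)\,d\xi$ with $\delta(\eps)\to0$. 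The It\^o (trace) correction splits according to $\{|g_\eps|>2\lambda\}$ or $\{|g_\eps|\le2\lambda\}$: on the former $|\varphi_\lambda'|\le2$ and \eqref{e5.2} bound it by $C\int_\O|g_\eps|\,d\xi$, on the latter $|\varphi_\lambda''|\le C/\lambda$ combined with \eqref{e5.2} (which forces $|D^2g|(|b_1|^2+|b_2|^2)\le C|g_\eps|\le 2C\lambda$ there) makes that piece $O(\lambda)$, hence negligible as $\lambda\to0$, exactly as in Theorem \ref{t4.2}. What remains is the second order \emph{spatial} term, and this is where \eqref{e5.5} is used.

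That Laplacian contribution, after integration by parts over $\O$ (the boundary terms being handled as for \eqref{e44.16}), equals
\[
-\int_\O\Big[\varphi_\lambda''(g_\eps)\,|\nabla g_\eps|^2+\varphi_\lambda'(g_\eps)\,D^2g\,[\nabla X_\eps,\nabla Y_\eps]\Big]\,d\xi ,
\]
where $\nabla g_\eps$ is the spatial gradient of the composite field and $D^2g\,[\,\cdot\,]$ is the Hessian quadratic form evaluated on $(\nabla X_\eps,\nabla Y_\eps)$ (the $2\times2$ block structure collapsing because $g$ is scalar-valued). Since $\varphi_\lambda''\ge0$ and, off $\{g=0\}$, $\varphi_\lambda'(g_\eps)=|\varphi_\lambda'(g_\eps)|\,\mathrm{sgn}(g_\eps)$, and since by \eqref{e5.5} the Hessian $D^2g$ is semidefinite with the sign of $g$ (positive semidefinite on $\{g>0\}$, negative semidefinite on $\{g<0\}$), the bracket is nonnegative and this term contributes nonpositively to $d\int_\O\varphi_\lambda(g_\eps)\,d\xi$ — the two-dimensional analogue of \eqref{e44.16}. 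I expect this sign verification, together with the passage to the a.e.\ limit $\varphi_\lambda'(g_\eps)\to\mathrm{sgn}(g_\eps)$ on $\{g_\eps\ne0\}$ and the control of the residual layer $\{|g_\eps|\le2\lambda\}$, to be the main obstacle; everything else is a routine adaptation of Theorems \ref{t4.2}--\ref{t5.1}.

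Collecting the three contributions and letting $\lambda\to0$ yields, $\P$-a.s.\ for $0\le s\le t\le T$,
\[
|g_\eps(t)|_2+\alpha\int_s^t\mathds1_{[|g_\eps|>0]}\,d\theta\le|g_\eps(s)|_2+\tilde C\int_s^t|g_\eps|_2\,d\theta+\delta(\eps)\int_s^t(1+|X_\eps|_2+|Y_\eps|_2)\,d\theta+\text{(martingale)},
\]
with $\tilde C>0$ the constant $C$ of the statement and the martingale being the stochastic integral of $b_1\,dW_1$, $b_2\,dW_2$ against $g'(X_\eps,Y_\eps)\,\mathrm{sgn}(g_\eps)$. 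The same inequality holds for $(\tilde X_\eps,\tilde Y_\eps)$; the uniform $L^2$-bound absorbs the $\delta(\eps)$-term, and letting $\eps\to0$ along $\tilde X_\eps\to\tilde X$, $\tilde Y_\eps\to\tilde Y$ shows that $\tilde Z(t):=e^{-\tilde C\,t}|g(\tilde X(t),\tilde Y(t))|_2$ is a nonnegative supermartingale with
\[
\tilde Z(t)+\alpha\int_0^t e^{-\tilde C\,s}\mathds1_{[\tilde Z(s)>0]}\,ds\le|g(x,y)|_2+\text{(martingale)},\qquad t\ge0 .
\]
Hence $g(x,y)=0$ a.e.\ in $\O$ forces $\tilde Z\equiv0$, i.e.\ $g(\tilde X(t),\tilde Y(t))=0$ for all $t$; otherwise, with $\tau=\inf\{t:\tilde Z(t)=0\}$, taking expectations gives $\E\tilde Z(t)+\alpha\int_0^t e^{-\tilde C\,s}\tilde{\tilde\P}[\tau>s]\,ds\le|g(x,y)|_2+\tilde C\int_0^t\E\tilde Z(s)\,ds$, and Gronwall's lemma yields $\tilde{\tilde\P}[\tau>t]\le\frac{\tilde C}{\alpha}(1-e^{-\tilde C\,t})^{-1}\,|g(x,y)|_{(L^2(\O))^2}$, as claimed.
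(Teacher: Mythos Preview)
Your proposal is correct and follows essentially the same route as the paper, which simply states that the proof is identical to that of Theorem~\ref{t4.2} with the approximating equation \eqref{e2.4} replaced by \eqref{e5.1b}, and that hypothesis \eqref{e5.5} is precisely what makes the analogue of \eqref{e44.16} hold. Your identification of the Laplacian term as the place where \eqref{e5.5} enters (via the sign of the Hessian quadratic form $D^2g[\nabla X_\eps,\nabla Y_\eps]\,\mathrm{sgn}(g)$) is exactly the point the paper singles out.
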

\noindent The proof is exactly the same as that of Theorem \ref{t4.2} 
where the approximating equation \eqref{e2.4} is replaced by \eqref{e5.1b}. We note that in this case
the corresponding inequality \eqref{e44.16} is a consequence of hypothesis \eqref{e5.5}. The details are omitted.

A particular example is
\[
g(r_1,r_2)=\alpha_1\,r_1+\alpha_2\,r_2,\qquad \forall r_1,r_2\in \R
\]
which, for $f_1$ and $f_2$ satisfying condition
\[
\begin{array}{ l}
\alpha_1\,f_1(r)\ge \alpha\qquad\text{in }\{\alpha_1\,r_1+\alpha_2\,r_2>0\}\\
\alpha_2\,f_2(r)   \le -\alpha\quad\;\text{in }\{\alpha_1\,r_1+\alpha_2\,r_2<0\}
\end{array}
\]
where $\alpha>0$, imply that system  \eqref{e5.1n} has a martingale solution
$(\tilde X, \tilde Y)$ that reaches the linear manifold
\[
 \Sigma=\{\alpha_1\,\tilde X+\alpha_2\,\tilde Y=0\}
\]
in a time $t$ with probability $\tilde{\tilde\P}$ $\ge 1-C\,t^{-1}|\alpha_1\,x+\alpha_2\,y|_{(L^2(\O))^2}$
and remains on this manifold afterwards.


\begin{thebibliography}{9}
\bibitem{B0} V. Barbu,
Nonlinear Differential Equations of Monotone Type in Banach Spaces, Springer (2010)
\bibitem{B1} V. Barbu,
A variational approach to stochastic nonlinear parabolic problems, J. Math Anal. Appl. 384, 2-15 (2011)
\bibitem{B2} V. Barbu,
Optimal control approach to nonlinear diffusion equations driven by Wiener noise, J. Optim. Theory Appl. 153, 1-26 (2012)
\bibitem{DZ} G. Da Prato, J. Zabczyk, Stochastic Equations in Infinite Dimensions (second edition), Cambridge (2014)
\bibitem{DFPR} G. Da Prato, F. Flandoli, E. Priola, M. R\"ockner,
Strong uniqueness for stochastic evolution equations in Hilbert spaces perturbed by a bounded measurable drift,
Annals of Probability 41 (5) 3306--3344 (2013)
\bibitem{Fil} A. F. Filippov, Differential  Equations with Discontinuous Right-hand Sides, Mat. Sbornik (N.S.) 51 (93), 99-128 (1960),
Amer. Math. Soc. Transl. 42, 199-231 (1964)
\bibitem{Fil1} A. F. Filippov, Differential  Equations with Discontinuous Righthand Sides, Kluwer Academic
Publishers, Norwel, (1988)
\bibitem{GK} I. Gy\"ongy, N. Krylov, Existence of strong solutions for It\^o's stochastic equations via approximations,
Probab. Theory Related Fields, 105 n. 2, 143-158 (1996)
\bibitem{H}K-C. Hsu,
Sliding mode controllers for uncertain systems with input nonlinearities,
Journal of Guidance, Control, and Dynamics, July, Vol. 21, No. 4 : pp. 666-669 (doi: 10.2514/3.22105)
\bibitem{SX}P. Shi, Y. Xia, G. P. Liu, D.  Rees,
On designing of sliding-mode control for stochastic jump systems,
IEEE Transactions on Automatic Control, vol. 51, no. 1, pp. 97-103, 2006
\bibitem{SK}D.J.W. Simpson, R. Kuske, Stochastically perturbed sliding motion in piecewise-smooth systems,
Discrete Contin. Dyn. Syst. Ser. B, 19 (9), 2889-2913 (2014)
\bibitem{U} V. Utkin, Sliding Modes in Control and Optimization, Springer-Verlag, Berlin (1992)
\bibitem{V} A. Yu. Veretennikov, On strong solution and explicit formulas for solutions of stochastic integral
equations, Math. USSR Sbornik, 39, 387-403 (1981)
\bibitem{Y} K. Yosida, Functional Analysis, Springer-Verlag, Berlin-Heidelberg-New York (1980)
\end{thebibliography}
\end{document}